\def\N{{\mathbb{N}}}
\def\Z{{\mathbb{Z}}}
\newcommand{\ff}{\mathfrak{f}}
\newcommand{\bu}{\mathbf{u}}
\newcommand{\bv}{\mathbf{v}}
\DeclareMathOperator{\Apr}{Apr}
\DeclareMathOperator{\PF}{PF}
\DeclareMathOperator{\F}{F}
\DeclareMathOperator{\FG}{FG}
\DeclareMathOperator{\G}{G}
\DeclareMathOperator{\sN}{N}
\newtheorem{thm}{Theorem}[section]
\newtheorem{cor}[thm]{Corollary}
\newtheorem{lem}[thm]{Lemma}
\newtheorem{prop}[thm]{Proposition}
\theoremstyle{definition}
\newtheorem{example}[thm]{Example}
\begin{document}
	
%%%%%%%%%%%%%%%%%%%%%%%%%%%%%%%%%%%%%%%%%%%%%%%
	
	\title{Coefficient Rings of Numerical Semigroup Algebras}
	
	\author{I-Chiau Huang}
	\address{Institute of Mathematics, Academia Sinica,
		6F, Astronomy-Mathematics Building,
		No. 1, Sec. 4, Roosevelt Road, Taipei 10617, Taiwan, R.O.C.}
	\email{ichuang@math.sinica.edu.tw}
	
	\author{Raheleh Jafari}
	\address{Mosaheb Institute of Mathematics, Kharazmi University, and 	
		School of Mathematics, Institute for
		Research in Fundamental Sciences (IPM), P. O. Box 19395-5746,
		Tehran, Iran}
	\email{rjafari@ipm.ir}
	
	\thanks{Raheleh Jafari was in part supported by a grant from IPM (No. 99130112).}
	
	\begin{abstract}  
		Numerical semigroup rings are investigated from the relative viewpoint.	
		 It is known that algebraic properties such as singularities of a numerical 
		semigroup ring are properties of a flat numerical semigroup algebra. In this 
		paper, we show that arithmetic and set-theoretic properties of a numerical 
		semigroup ring are properties of an equi-gcd numerical semigroup algebra.
		\end{abstract}

\subjclass[2010]{13B02, 20M25}
	
\keywords{
	coefficient ring, Frobenius monomial, gap monomial, irreducibility, numerical semigroup algebra, 
	radical, symmetry.}
	\maketitle
	
%%%%%%%%%%%%%%%%%%%%%%%%%%%%%%%%%%%%%%%%%%%%%%%%%%
	
\section{Introduction}

%%%%%%%%%%%%%%%%%%%%%%%%%%%%%%%%%%%%%%%%%%%

A {\em numerical semigroup} $S$ is a monoid generated by finitely many positive rational 
numbers. In the literature, it is often assumed that $S\subset\N$ and $\gcd(S)=1$. With 
 these assumptions, the numerical semigroup ring is defined as the subring
\begin{equation}\label{eq:nsr}
\kappa[\![\bu^S]\!]:=\Big\{\sum_{s\in S}a_s\bu^s \colon a_s\in\kappa\Big\}
\end{equation}
of the power series ring $\kappa[\![\bu]\!]$ over a field $\kappa$. 
In addition to $R=\kappa[\![\bu^S]\!]$, we also use the notation $S=\log_\bu R$ to indicate the correspondence between logarithms and exponents of semigroups and rings. 
Because of the correspondence, one may work either on the logarithmic side or the 
exponential side of the theory.  In this paper, we choose to stay in the exponential side 
in compliance with the previous study of ring theoretic properties such as
Cohen-Macaulayness, Gorensteiness and complete intersection \cite{HK}.
 The reader preferring semigroups can switch freely to the logarithmic side.
Our relative viewpoint on arithmetic and set-theoretic properties such as
symmetry, irreducibility and Cohen-Macaulay type suggests further developments
for numerical semigroups.

In the literature, the numerical semigroup ring (\ref{eq:nsr}) is commonly 
denoted as $\kappa[\![S]\!]$, or $\kappa[\![s_1,\ldots,s_n]\!]$ if $S$ is generated by 
$s_1,\ldots,s_n$. Although the common notation leads to no misunderstanding in the 
classical situation, the appearance of rings such as ${\mathbb Q}[\![3,5,7]\!]$ are 
vague in distinguishing elements $3,5,7$ in the underlying field $\mathbb Q$ from 
exponents $3,5,7$ of monomials $\bu^3,\bu^5,\bu^7$. To emphasize the relative 
nature, our notation is more handy in adding monomials to a numerical semigroup 
ring. For instance, given a numerical semigroup ring $R$ in the variable $\bu$, we may
add a monomial $\bu^t$ to $R$ and obtain $R[\![\bu^t]\!]$.

The assumptions $S\subset\N$ and $\gcd(S)=1$  for a numerical semigroup $S$ 
are not essential for the construction of $\kappa[\![\bu^S]\!]$. For an arbitrary numerical 
semigroup $S$, we may still define a numerical semigroup ring $\kappa[\![\bu^S]\!]$ as 
(\ref{eq:nsr}) allowing for monomials in $\bu$  to have rational exponents. Given a 
positive rational number $t$, the numerical semigroup ring $\kappa[\![\bv^{tS}]\!]$ is
isomorphic to $\kappa[\![\bu^S]\!]$ via the symbolized relation $\bv^t=\bu$. For a 
numerical semigroup $S$, there is a unique positive rational number $t$ such that 
$tS\subset\N$ and $\gcd(tS)=1$. Therefore any numerical semigroup ring
is isomorphic to one constructed from a numerical semigroup $S$ with the conditions 
$S\subset\N$ and $\gcd(S)=1$.

A numerical semigroup ring  has various algebra structures. It is  an algebra over the underlying field,  as well as over other subrings. We consider the category of numerical semigroup rings. A morphism in the category gives rise to an algebra.
 As seen in \cite{HJ, HK},  such a relative viewpoint sheds light to the algebraic properties of numerical semigroup rings. In this paper, we show that arithmetic and set-theoretic properties of a numerical semigroup 
ring are also relative in nature.  To be precise, we consider
 numerical semigroups $S\subset S'$. The numerical semigroup ring $R':=\kappa[\![\bu^{S'}]\!]$ 
is an algebra over the numerical semigroup ring $R:=\kappa[\![\bu^S]\!]$. We denote the algebra 
$R'$ together with the {\em coefficient ring} $R$ by $R'/R$ and call it a {\em numerical semigroup algebra}.
A numerical semigroup algebra and its coefficient ring can be represented using different variables. 
In this paper, we use the same variable for a numerical semigroup algebra and its coefficient ring.

In the classical case that $S\subset\N$ and $\gcd(S)=1$, there naturally arise two numerical semigroup algebras:
\begin{itemize}
\item
The ring $\kappa[\![\bu^S]\!]$ is an algebra over a Noether normalization 
$\kappa[\![\bu^s]\!]$, where $0\neq s\in S$.
\item
The ring $\kappa[\![\bu^S]\!]$ serves also as a coefficient ring for the algebra 
$\kappa[\![\bu]\!]$.
\end{itemize}
Singularities such as Cohen-Macaulayness, Gorensteiness and complete intersection 
of the ring $\kappa[\![\bu^S]\!]$ are in fact properties of the  flat algebra 
$\kappa[\![\bu^S]\!]/\kappa[\![\bu^s]\!]$. We may replace  the power series ring 
$\kappa[\![\bu^s]\!]$ by an arbitrary numerical semigroup ring $R$ and consider 
singularities of a numerical semigroup algebra $R''/R$ and its subalgebra $R'/R$. 
See \cite{HJ, HK} for investigations emphasizing algebras over a fixed coefficient ring.
The purpose of this paper is to clarify notions of the ring $\kappa[\![\bu^S]\!]$ that are 
in fact notions of the algebra $\kappa[\![\bu]\!]/\kappa[\![\bu^S]\!]$. We will replace  
the power series ring $\kappa[\![\bu]\!]$ by an arbitrary numerical semigroup ring 
$R''$ and regard it as an algebra over various coefficient rings. To emphasize the 
role of coefficient rings for the fixed $R''$, we call a coefficient ring $R'$ containing 
another coefficient ring $R$ an {\em extension} of $R$. For the case 
$R''=\kappa[\![\bu]\!]$, the semigroup $\log_\bu R'$ is called an oversemigroup in
\cite{oversemigroups} and also called an extension of $\log_\bu R$ in 
\cite{Ojeda-Rosales-2020}.

For a numerical semigroup $S\subset\N$ with $\gcd (S)=1$, there are many results 
concerning Frobenius numbers, pseudo-Frobenius numbers, gaps, symmetry, 
pseudo-symmetry, almost symmetry, irreducibility and Cohen-Macaulay type. On 
the way to exhibiting the relative nature of these invariants and notions, new notions 
appear. Let $R'/R$ be a numerical semigroup algebra in the variable $\bu$. 
Resembling $\kappa[\![\bu]\!]/\kappa[\![\bu^S]\!]$, we choose an integer $n$ such 
that $n\log_\bu R'\subset\N$. The algebra $R'/R$ is called {\em equi-gcd} if 
$\gcd(n\log_\bu R)=\gcd(n\log_\bu R')$. This condition is independent of the choice 
of $n$. In particular, $\kappa[\![\bu]\!]/\kappa[\![\bu^S]\!]$ is equi-gcd.  Arithmetic 
and set-theoretic properties of the semigroup $S$ often show up inside $\N$. We 
will prove that certain results in 
\cite{Froberg-etal-1987, Kunz-1970, Nari-2013, Ojeda-Rosales-2020, Rosales-1996, 
Rosales-Branco-2003, RG, Swanson-2009} 
for numerical semigroups are special cases of general phenomenons for coefficient 
rings of equi-gcd numerical semigroup algebras.  Along this line, generalizations of 
more results for numerical semigroups are expected.

Beyond the classical theory of numerical semigroups, there are new phenomenons 
in our framework. For example, the numerical semigroup ring 
$R=k[\![\bu^3,\bu^7,\bu^8]\!]$ is not symmetric as seen in Kunz's 
work \cite{Kunz-1970}. Adding the monomial $\bu^5$, we obtain a larger numerical 
semigroup ring $R'=R[\![\bu^5]\!]$, which is still not symmetric. 
We will see that the coefficient ring of the algebra $R'/R$ is symmetric. 
In this example, a symmetric phenomenon arises from two non-symmetric numerical
semigroup rings.

This paper is organized as follows:
For an equi-gcd numerical semigroup algebra, we will define Frobenius monomials, 
pseudo-Frobenius monomials, gap monomials and sporadic monomials in 
Section~\ref{sec:F&PF} as generalizations of corresponding invariants for numerical 
semigroups. In the classical case,  the number of gap  monomials has an upper 
bound in terms of the numbers of pseudo-Frobenius monomials and sporadic 
monomials \cite{Froberg-etal-1987}.  For a general equi-gcd numerical semigroup 
algebra, besides an analogous upper bound, we will  also give a lower bound for the 
number of gap monomials in terms of the numbers of Frobenius monomials and 
sporadic monomials. In this section, Kunz's criterion for a numerical semigroup ring 
to be Gorenstein \cite{Kunz-1970} is generalized.

 The next two sections are about symmetry, irreducibility and their  connection.
Section~\ref{sec:irr} is devoted to  the study of the irreducibility of coefficient rings of equi-gcd numerical semigroup algebras.   It is known  that irreducible numerical semigroups are either symmetric or pseudo-symmetric \cite{Rosales-Branco-2003}. As a main conclusion of this section, we generalize this result to 
equi-gcd numerical semigroup algebras.  Pseudo-Frobenius numbers of numerical semigroups can be 
read from Ap\'{e}ry sets. We present an analogous way to obtain pseudo-Frobenius monomials in Section~\ref{sec:sym},  which follows a characterization of almost symmetric semigroup algebras. 

In the last two sections, we will see classes of extensions. Radicals of  coefficient rings are introduced in Section~\ref{sec:rad} to generalize the classical result that a numerical semigroup is one over $n$ of 
infinitely many symmetric numerical semigroups \cite{Swanson-2009}. Finally, in Section~\ref{sec:fgm}, we investigate when all extensions of a coefficient ring are intersections of radicals. The classical version of this problem  was studied for numerical semigroups in \cite{Ojeda-Rosales-2020}. 
Fundamental gap monomials emerge as in the classical case.

%%%%%%%%%%%%%%%%%%%%%%%%%%%%%%%%%%%%%%%%%%%%

\section{Frobenius and Pseudo-Frobenius Monomials}\label{sec:F&PF}

%%%%%%%%%%%%%%%%%%%%%%%%%%%%%%%%%%%%%%%%%%%%%%%%%%

In this section, we consider a numerical semigroup algebra $R'/R$ in the variable $\bu$.
Recall that the conductor of $R'/R$ is the ideal
$\ff(R'/R):=\{a\in R \colon aR'\subset R\}$ of $R'$. Conductors are generated by
monomials and satisfy the following properties.
\begin{itemize}
\item $\ff(R'/R)=R'$ if and only if $R'=R$.
\item $\ff(R''/R')\ff(R'/R)\subset\ff(R''/R)\subset\ff(R'/R)$ for numerical semigroup algebras $R''/R'$ and $R'/R$.
\end{itemize}

\begin{example}
Let $R=\kappa[\![\bu^4,\bu^6,\bu^7,\bu^9]\!]$, $R'=\kappa[\![\bu^2,\bu^5]\!]$, $R''=\kappa[\![\bu^2,\bu^3]\!]$
and $R'''=\kappa[\![\bu]\!]$. Then
\begin{eqnarray*}
\ff(R'''/R')&=&R'''\bu^4=R'\bu^4+R'\bu^5,\\
\ff(R''/R')&=&R''\bu^2=R'\bu^2+R'\bu^5,\\
\ff(R'/R)&=&R'\bu^4+R'\bu^7=R\bu^4+R\bu^6+R\bu^7+R\bu^9.
\end{eqnarray*}
\end{example}

Monomials in $R'$ but not in $R$ are called {\em gap monomials} of $R'/R$. Monomials in
$R$ but not in $\ff(R'/R)$ are called {\em sporadic monomials} of $R'/R$. The set of gap
monomials and the set of sporadic monomials of $R'/R$ are denoted by $\G(R'/R)$ and
$\sN(R'/R)$, respectively.  For an equi-gcd numerical semigroup algebra
$\kappa[\![\bu]\!]/R$, sporadic monomials and gap monomials are exponential versions of
sporadic elements and gaps of the numerical semigroup $\log_\bu R$.  For a numerical
semigroup $S\subset\N$, the set of gaps is finite if and only if $\gcd(S)=1$. This well-known
fact is a special case of the following result for numerical semigroup algebras.
\begin{prop}\label{ff}
For a numerical semigroup algebra $R'/R$ in the variable $\bu$, the following conditions are equivalent.
\begin{itemize}
\item $R'/R$ is equi-gcd.
\item $\ff(R'/R)\neq 0$.
\item The set $\G(R'/R)$ is finite.
\item The set $\sN(R'/R)$ is finite.
\end{itemize}
\end{prop}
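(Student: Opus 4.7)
Transfer everything to the logarithmic side: write $S:=\log_\bu R\subset S':=\log_\bu R'$, fix $n\in\N$ with $nS'\subset\N$, and set $d:=\gcd(nS)$ and $d':=\gcd(nS')$. The inclusion $nS\subset nS'$ forces $d'\mid d$, and equi-gcd means $d=d'$. Since $\ff(R'/R)$ is a monomial ideal, it equals $\bigoplus_{c\in C}\kappa\bu^c$ with $C:=\{c\in S:c+S'\subset S\}$, so $\G(R'/R)$ and $\sN(R'/R)$ are indexed by $S'\setminus S$ and $S\setminus C$ respectively. The arithmetic heart of the argument is this: when $d=d'$, both $nS$ and $nS'$ lie inside $d\N$, and the rescalings $(1/d)nS$, $(1/d)nS'$ are classical numerical semigroups in $\N$ with gcd $1$; both are cofinite in $\N$, so there is a threshold $M$ such that every element of $d\N$ exceeding $M$ already lies in $nS$.

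\textbf{(i) implies (ii), (iii), (iv).} Pick any $c\in nS$ with $c\geq M$. For every $s'\in nS'\subset d\N$, the sum $c+s'$ lies in $d\N$ and exceeds $M$, hence lies in $nS$. Dividing by $n$ says $s+S'\subset S$ for $s:=c/n\in S$, so $\bu^s\in\ff(R'/R)$, giving (ii). The same cofiniteness gives the inclusion $S'\setminus S\subset (1/n)(d\N\setminus nS)$, which is finite, proving (iii). For (iv), a parallel argument shows that every $s\in S$ with $ns\geq M$ belongs to $C$, so $S\setminus C$ is finite.

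\textbf{Negation of (i) implies negation of (ii), (iii), (iv).} Assume $d'<d$. Then $(1/d')nS'$ is cofinite in $\N$, so $nS'$ contains all sufficiently large elements of $d'\N$, and infinitely many of them are not divisible by $d$, hence cannot lie in $nS\subset d\N$; this gives $\neg$(iii). If some nonzero monomial $\bu^c$ lay in $\ff(R'/R)$, then $nc+nS'\subset nS\subset d\N$ with $nc\in d\N$ would force $nS'\subset d\N$, contradicting $d'<d$; so $\ff(R'/R)=0$, proving $\neg$(ii). This also makes $C$ empty, so $\sN(R'/R)$ consists of every monomial of $R$, which is an infinite set because $S$ is infinite, proving $\neg$(iv).

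\textbf{Main obstacle.} Nothing is conceptually deep. The care required is in bookkeeping the normalization $n$ and in noting that the divisibility direction is $d'\mid d$, so that once a conductor exists, the inclusion $nS'\subset d\N$ is automatic and forces equality of gcd's. This same observation is what makes the definition of equi-gcd independent of the chosen $n$, and it implicitly underwrites the whole argument.
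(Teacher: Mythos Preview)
Your proof is correct and follows essentially the same approach as the paper's. The only cosmetic difference is that the paper normalizes at the outset so that $\log_\bu R'\subset\N$ with $\gcd(\log_\bu R')=1$ (your $d'=1$), then reads off finiteness of $\G(R'/R)$ and $\sN(R'/R)$ as subsets of the finite sets $\G(\kappa[\![\bu]\!]/R)$ and $\sN(\kappa[\![\bu]\!]/R)$, whereas you keep a general $n$ and argue directly via the threshold $M$; the underlying cofiniteness argument is identical.
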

\begin{proof}
We may assume that  $\log_\bu R'\subset\N$ and $\gcd(\log_\bu R')=1$.  So
$\ff(\kappa[\![\bu]\!]/R')=\kappa[\![\bu]\!]\bu^{w'+1}$, where $w'$ is the Frobenius number
of $\log_\bu R'$. Let $p=\gcd(\log_\bu R)$.

Consider first the case that  $R'/R$ is equi-gcd, equivalently $p=1$. Let $w$ be the Frobenius number
of $\log_\bu R$. Then $\ff(\kappa[\![\bu]\!]/R)=\kappa[\![\bu]\!]\bu^{w+1}$ and
$\bu^{w+w'+1}\in\ff(R'/R)$. As a subset of
the finite set $\G(\kappa[\![\bu]\!]/R)$ (resp. $\sN(\kappa[\![\bu]\!]/R)$), the set 
$\G(R'/R)$ (resp. $\sN(R'/R)$) is finite.

Now we consider the case that  $R'/R$ is not equi-gcd, equivalently $p>1$. There are infinitely 
many monomials $\bu^s$ in $R'$ such that
$p$ does not divide $s$. Therefore $\G(R'/R)$ is infinite. Choose such a monomial $\bu^s$.
Then $\bu^t\bu^s\not\in R$ for any $\bu^t\in R$. That means $\bu^t\not\in\ff(R'/R)$. Therefore
$\ff(R'/R)=0$. The set $\sN(R'/R)$ consists of all monomials in $R$ and hence is also infinite.
\end{proof}

If an equi-gcd numerical semigroup algebra $R'/R$ is flat, then $R'=R$ by \cite[Proposition 3.5]{HJ}.
Beyond flatness, there are properties of non-trivial equi-gcd numerical semigroup algebras worth studying.
As seen in \cite{HJ, HK}, Ap\'{e}ry monomials form an essential tool to study flat numerical 
semigroup algebras. For equi-gcd numerical semigroup algebras, Frobenius monomials and 
Pseudo-Frobenius monomials play an important role.
In a numerical semigroup algebra $R'/R$ in the variable $\bu$, we have two partial orders:
\begin{itemize}
\item $\bu^s\preceq_R\bu^t$ if and only if $\bu^{t-s}\in R$.  	
\item $\bu^s\preceq_{R'}\bu^t$ if and only if $\bu^{t-s}\in R'$.
\end{itemize}
We consider maximal elements in $\G(R'/R)$ with respect to these two partial orders. Those maximal with
respect to $\preceq_{R'}$ are called {\em Frobenius monomials} of $R'/R$. Those maximal with
respect to $\preceq_{R}$ are called {\em pseudo-Frobenius monomials} of $R'/R$. Frobenius
monomials are pseudo-Frobenius monomials. The sets of Frobenius monomials and
pseudo-Frobenius monomials are denoted by $\F(R'/R)$ and $\PF(R'/R)$. The
cardinalities of these sets are called the {\em Frobenius type} (F-type for short) and the
{\em Cohen-Macaulay type} (CM-type for short) of $R'/R$, respectively.  
Cohen-Macaulay type was introduced for numerical semigroups  \cite{Froberg-etal-1987}.
This notion of type coincides with the Cohen-Macaulay type of a numerical semigroup ring \cite[Theorem 3.1]{Stamate-2018}.  The Frobenius type of an equi-gcd numerical semigroup
algebra $\kappa[\![\bu]\!]/\kappa[\![\bu^S]\!]$ is always one. Beyond the classical case, the
Frobenius type has an application somewhat dual to the Cohen-Macaulay type in the comparison
of the number of gap monomials and the number of sporadic monomials. See Proposition~\ref{prop:CM-F}.

Frobenius monomials and  pseudo-Frobenius monomials generalize Frobenius numbers and
pseudo-Frobenius numbers in the non-trivial case. Assume that $S\subset\N$ is a numerical 
semigroup with $\gcd(S)=1$. 
Recall that a number $w\in\Z\setminus S$ is 
called pseudo-Frobenius in \cite{RG} if $w+s\in S$ for any $0\neq s\in S$. For the trivial case 
$S=\N$, there is a unique Frobenius number  and a unique pseudo-Frobenius number, namely $-1$. 
However $\F(\kappa[\![\bu]\!]/\kappa[\![\bu]\!])=\PF(\kappa[\![\bu]\!]/\kappa[\![\bu]\!])=\emptyset$ 
in our definition and if $S\neq\N$, the only Frobenius monomial of 
$\kappa[\![\bu]\!]/\kappa[\![\bu^S]\!]$ is $\bu^{w}$, where $w$ is the Frobenius number of $S$. 
For the non-trivial case  $S\neq\N$, the exponents of pseudo-Frobenius 
monomials of $\kappa[\![\bu]\!]/\kappa[\![\bu^S]\!]$ are exactly pseudo-Frobenius numbers of $S$. 
We remark that all pseudo-Frobenius numbers are positive in the nontrivial case.
Note that $0\neq w+1\in S$ in such a case.  If there is a negative pseudo-Frobenius number $-s$, 
then  $0\neq w+1-s\in S$.  By induction, $w+1-ns\in S$ for all $n\geq1$. This is impossible, since $S\subset\N$.

\begin{example}\label{ex:97886}
Let $R=\kappa[\![\bu^4,\bu^6,\bu^9]\!]$ and $R'=R[\![\bu^5]\!]$.
Then $\G(R'/R)=\{\bu^5,\bu^{11}\}$,
$\sN(R'/R)=\{1,\bu^6\}$ and $\PF(R'/R)=\F(R'/R)=\{\bu^{11}\}$.
\end{example}

\begin{example}
Let $R=\kappa[\![\bu^5,\bu^7,\bu^9]\!]$ and $R'=R[\![\bu^8,\bu^{11}]\!]$. Then 
$\G(R'/R)=\{\bu^8,\bu^{11},\bu^{13}\}$, $\sN(R'/R)=\{1,\bu^5\}$
and  $\PF(R'/R)=\F(R'/R)=\{\bu^{11},\bu^{13}\}$.
\end{example}

\begin{example}\label{ex:2.5}
Let $R=\kappa[\![\bu^5,\bu^7,\bu^{11},\bu^{13}]\!]$ and  $R'=R[\![\bu^3]\!]$. Then 
$\sN(R'/R)=\{1,\bu^5\}$, $\G(R'/R)=\{\bu^3,\bu^6,\bu^8,\bu^9\}$, $\PF(R'/R)=\{\bu^6,\bu^8,\bu^9\}$ 
and $\F(R'/R)=\{\bu^8,\bu^9\}$.
\end{example}

 In the classical case, the ratio of $|\G(R'/R)|$ to $|\sN(R'/R)|$ is bounded above by the CM-type
of $R'/R$ \cite[Theorem 20]{Froberg-etal-1987}. For equi-gcd numerical semigroup algebras, this result still
holds. Moreover, the inverse of the F-type provides a lower bound for the ratio.
\begin{prop}\label{prop:CM-F}
Let $R'/R$ be a non-trivial equi-gcd numerical semigroup algebra of CM-type $n$ and F-type 
$n'$. Then
\[
\frac{1}{n'}\leq \frac{|\G(R'/R)|}{|\sN(R'/R)|}\leq n.
\]
\end{prop}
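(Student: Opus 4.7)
The plan is to prove both inequalities by constructing injective maps: $\G(R'/R) \hookrightarrow \PF(R'/R) \times \sN(R'/R)$ for the upper bound and $\sN(R'/R) \hookrightarrow \F(R'/R) \times \G(R'/R)$ for the lower bound. All four sets are finite by Proposition~\ref{ff}, and non-triviality of $R'/R$ guarantees $\G(R'/R) \neq \emptyset$ (whence $\PF(R'/R)$ and $\F(R'/R)$ are nonempty as sets of maximal elements of a nonempty finite poset) and $1 \in \sN(R'/R)$, so the ratios make sense.

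For the upper bound, given $\bu^g \in \G(R'/R)$, I would pick a pseudo-Frobenius monomial $\bu^f \succeq_R \bu^g$; such an $\bu^f$ exists because $\G(R'/R)$ is finite. Then $\bu^{f-g} \in R$ by definition of $\preceq_R$. The key observation is that $\bu^{f-g} \in \sN(R'/R)$: if instead $\bu^{f-g} \in \ff(R'/R)$, then since $\bu^g \in R'$, the product $\bu^f = \bu^{f-g}\cdot \bu^g$ would lie in $R$, contradicting $\bu^f \in \G(R'/R)$. The assignment $\bu^g \mapsto (\bu^f, \bu^{f-g})$ is injective because $g$ is recovered as $f-(f-g)$, giving $|\G(R'/R)| \leq |\PF(R'/R)|\cdot|\sN(R'/R)| = n\,|\sN(R'/R)|$.

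The lower bound proceeds by a dual argument swapping the roles of $R$ and $R'$. Given $\bu^s \in \sN(R'/R)$, the condition $\bu^s \notin \ff(R'/R)$ produces a monomial $\bu^a \in R'$ with $\bu^{s+a} \notin R$; as $\bu^{s+a} \in R'$ automatically, we obtain $\bu^{s+a} \in \G(R'/R)$. Now pick a Frobenius monomial $\bu^{f'} \succeq_{R'} \bu^{s+a}$. Then $\bu^{f'-s} = \bu^a \cdot \bu^{f'-s-a}$ lies in $R'$, and it cannot lie in $R$ (otherwise $\bu^{f'} = \bu^s \cdot \bu^{f'-s} \in R$, contradicting $\bu^{f'} \in \G(R'/R)$). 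Hence $\bu^{f'-s} \in \G(R'/R)$, and the injective assignment $\bu^s \mapsto (\bu^{f'}, \bu^{f'-s})$ yields $|\sN(R'/R)| \leq |\F(R'/R)|\cdot|\G(R'/R)| = n'\,|\G(R'/R)|$.

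The main subtlety lies in the lower-bound construction: the definition of sporadic only says $\bu^s$ fails to be in the conductor, and one must exploit this failure to manufacture the auxiliary gap $\bu^{s+a}$ before Frobenius-maximality in $\preceq_{R'}$ can be invoked. Once this bridge is set up, everything else is the symmetric bookkeeping that recovers, in the equi-gcd setting, the classical Fröberg--Gottlieb--Häggkvist count while simultaneously detecting the F-type lurking below the CM-type.
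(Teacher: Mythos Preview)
Your proof is correct and follows essentially the same approach as the paper's: both halves build the same injective maps $\G(R'/R)\hookrightarrow \PF(R'/R)\times\sN(R'/R)$ and $\sN(R'/R)\hookrightarrow \F(R'/R)\times\G(R'/R)$ via the same ``pick a maximal element above and record the quotient'' argument. Your treatment of the lower bound is in fact slightly cleaner than the paper's, since you explicitly verify $\bu^{f'-s}\in R'$ via the factorization $\bu^a\cdot\bu^{f'-s-a}$ rather than leaving that step implicit in the phrase ``multiplying $\bu^w$ by a monomial of $R'$.''
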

\begin{proof}
For any $\bu^s\in\G(R'/R)$, there exists a pseudo-Frobenius monomial $\bu^w$ such that $\bu^{w-s}\in R$.
Clearly $\bu^{w-s}\not\in\ff(R'/R)$.
The element $(\bu^{w-s},\bu^w)\in\sN(R'/R)\times\PF(R'/R)$ determines
$\bu^s$. Therefore
\[
|\G(R'/R)|\leq n|\sN(R'/R)|.
\]

Let $\bu^s\in\sN(R'/R)$. Since $\bu^s\not\in\ff(R'/R)$, there exists a monomial $\bu^v\in R'$ such that $\bu^{s+v}\notin R$, which  implies $\bu^v\notin R$. Setting $w=s+v$, we have $\bu^w\in\G(R'/R)$ and $\bu^{w-s}\in\G(R'/R)$.
Multiplying $\bu^w$ by a monomial of $R'$, we may assume that $\bu^w$ is a Frobenius monomial.
The element $(\bu^{w-s},\bu^w)\in\G(R'/R)\times\F(R'/R)$ determines
$\bu^s$. Therefore
\[
|\sN(R'/R)|\leq n'|\G(R'/R)|.
\]
\end{proof}

In the situation where $\kappa[\![\bu]\!]/R$ is equi-gcd, the numerical semigroup ring
$R$ is Gorenstein if and only if it has only one pseudo-Frobenius monomial. Gorensteiness
of $R$ has a criterion that $|\G(\kappa[\![\bu]\!]/R)|=|\sN(\kappa[\![\bu]\!]/R)|$.
We may generalize this classical criterion \cite{Kunz-1970} to the relative situation as follows.

\begin{thm}\label{thm:ume}
Let $R'/R$ be an equi-gcd numerical semigroup algebra with a single Frobenius monomial $\bu^w$.
The algebra has a single pseudo-Frobenius monomial if and only if  $|\G(R'/R)|=|\sN(R'/R)|$.
\end{thm}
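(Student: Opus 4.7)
The plan is to exploit the injections constructed in the proof of Proposition~\ref{prop:CM-F}, using the hypothesis that there is a unique Frobenius monomial $\bu^w$ to sharpen them into a single map $\phi\colon\sN(R'/R)\to\G(R'/R)$ given by $\bu^s\mapsto\bu^{w-s}$.

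For the forward direction, if there is only one pseudo-Frobenius monomial then the CM-type $n$ equals $1$; combined with the assumed F-type $n'=1$, Proposition~\ref{prop:CM-F} yields
\[
1 \;=\; \frac{1}{n'} \;\leq\; \frac{|\G(R'/R)|}{|\sN(R'/R)|} \;\leq\; n \;=\; 1,
\]
which forces $|\G(R'/R)|=|\sN(R'/R)|$.

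For the converse I would first verify that $\phi$ is well defined. Given $\bu^s\in\sN(R'/R)$, the condition $\bu^s\notin\ff(R'/R)$ produces some $\bu^v\in R'$ with $\bu^{s+v}\notin R$, so $\bu^{s+v}\in\G(R'/R)$. Since $\bu^w$ is the unique Frobenius monomial and $\G(R'/R)$ is finite, a chain in $\G(R'/R)$ from $\bu^{s+v}$ upward under $\preceq_{R'}$ must terminate at $\bu^w$, giving $\bu^{s+v}\preceq_{R'}\bu^w$; hence $\bu^{w-s-v}\in R'$ and $\bu^{w-s}=\bu^v\bu^{w-s-v}\in R'$. If $\bu^{w-s}\in R$, then $\bu^w=\bu^s\bu^{w-s}\in R$, contradicting $\bu^w\in\G(R'/R)$; so $\bu^{w-s}\in\G(R'/R)$. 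Injectivity of $\phi$ is immediate, and the hypothesis $|\G(R'/R)|=|\sN(R'/R)|$ upgrades $\phi$ to a bijection. Hence every $\bu^t\in\G(R'/R)$ has the form $\bu^{w-s}$ for some $\bu^s\in\sN(R'/R)$; that is, $\bu^{w-t}=\bu^s\in R$, or equivalently $\bu^t\preceq_R\bu^w$. Therefore $\bu^w$ is the unique maximal element of $\G(R'/R)$ under $\preceq_R$, so $\PF(R'/R)=\{\bu^w\}$.

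The main obstacle is the well-definedness of $\phi$: showing $\bu^{w-s}\in R'$ requires a nontrivial use of the uniqueness of the Frobenius monomial (to push $\bu^{s+v}$ all the way up to $\bu^w$ along $\preceq_{R'}$), while showing $\bu^{w-s}\notin R$ uses that $\bu^w$ itself is a gap. Once these are in hand, the bijectivity argument and its consequence for $\PF(R'/R)$ are routine.
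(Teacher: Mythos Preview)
Your proof is correct and follows essentially the same approach as the paper. The paper also constructs the injection $\sN(R'/R)\to\G(R'/R)$, $\bu^s\mapsto\bu^{w-s}$, and argues bijectivity; the only cosmetic difference is that for the forward direction you invoke Proposition~\ref{prop:CM-F} directly, whereas the paper spells out the inverse map $\G(R'/R)\to\sN(R'/R)$ (which is precisely the content of that proposition's proof in the CM-type one case).
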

\begin{proof}
Given  $\bu^s\in\sN(R'/R)$, there is $\bu^t\in\G(R'/R)$ such that $\bu^{t+s}\in\G(R'/R)$.
Since $\bu^w$ is the unique Frobenius monomial, $\bu^{w-t-s}\in R'$ and hence
$\bu^{w-s}=\bu^{w-t-s}\bu^t\in R'$. But $\bu^{w-s}\not\in R$, since $\bu^s\in R$ and
$\bu^w=\bu^{w-s}\bu^s\not\in R$. So we have an injective map $\sN(R'/R)\to\G(R'/R)$
sending $\bu^s$ to $\bu^{w-s}$.

If $|\G(R'/R)|=|\sN(R'/R)|$, the above map is surjective. The preimage of $\bu^t\in\G(R'/R)$
is $\bu^{w-t}$. Since $\bu^t\preceq_R\bu^w$, the Frobenius monomial is the only
pseudo-Frobenius monomial.

Now we assume that $\bu^w$ is the only pseudo-Frobenius monomial of $R'/R$.
Consider $\bu^s\in\G(R'/R)$. Then $\bu^{w-s}\in R$. But $\bu^w=\bu^{w-s}\bu^s\not\in R$.
Hence $\bu^{w-s}\in\sN(R'/R)$. This shows that the map $\sN(R'/R)\to\G(R'/R)$ above is
also surjective. Therefore $|\G(R'/R)|=|\sN(R'/R)|$.

\end{proof}

\begin{example}\label{ex:97887}
Let $R=\kappa[\![\bu^8,\bu^{12},\bu^{19},\bu^{21}]\!]$ and $R'=R[\![\bu^{22}]\!]$. Then
$\G(R'/R)=\{\bu^{22},\bu^{30},\bu^{34}\}$ and $\sN(R'/R)=\{1,\bu^8,\bu^{12}\}$. 
In this example, $|\G(R'/R)|=|\sN(R'/R)|$ even though $\PF(R'/R)=\F(R'/R)=\{\bu^{30},\bu^{34}\}$.
\end{example}

In examples~\ref{ex:97886} and \ref{ex:97887}, there is a monomial $\bu^s$ such that
$\G(R'/R)=\bu^s\sN(R'/R)$. This is not true in general for a numerical semigroup algebra
$R'/R$ such that $|\G(R'/R)|=|\sN(R'/R)|$.

\begin{example}
Let $R'=\kappa[\![\bu]\!]$ and $R=\kappa[\![\bu^3,\bu^5]\!]$. Then $\G(R'/R)=\{\bu,\bu^2,\bu^4,\bu^7\}$
and $\sN(R'/R)=\{1,\bu^3,\bu^5,\bu^6\}$. Note that $\G(R'/R)\neq \bu^s\sN(R'/R)$ for any $\bu^s\in R'$.
In this example, $\PF(R'/R)=\{\bu^7\}$.
\end{example}

\begin{example}\label{ex:22}
Let $R=\kappa[\![\bu^{11},\bu^{15},\bu^{19},\bu^{21}]\!]$
and  $R'=R[\![\bu^{31},\bu^{39}]\!]$. Then
$\G(R'/R)=\{\bu^{31},\bu^{39},\bu^{46},\bu^{50}\}$  and  $\sN(R'/R)=\{1,\bu^{11},\bu^{15},\bu^{19}\}$. Note that 
$\G(R'/R)\neq \bu^s\sN(R'/R)$ for any $\bu^s\in R'$.
 In this example, $\PF(R'/R)=\F(R'/R)=\{\bu^{46},\bu^{50}\}$.
\end{example}

%%%%%%%%%%%%%%%%%%%%%%%%%%%%%%%%%%%%%%%%%%%%%%%%%%

\section{Irreducibility}\label{sec:irr}

%%%%%%%%%%%%%%%%%%%%%%%%%%%%%%%%%%%%%%%%%%%%%%%%%%

 To study an algebraic structure, one often searchs for  simplest objects.  Irreducibility
is such a notion for numerical semigroups  and was first introduced in \cite{Rosales-Branco-2003}.
In this section, we extend the notion to the relative situation.
Let $R''/R$ be an equi-gcd numerical semigroup algebra.  For any extension $R'$ of $R$, the algebras 
$R''/R'$ and $R'/R$ are also equi-gcd. The coefficient ring $R$ is {\em irreducible} in $R''$ 
if it cannot be expressed as the intersection of two non-trivial extensions.  
For the case $R''=\kappa[\![\bu]\!]$, the coefficient ring $R$ is irreducible if and only if 
$\log_\bu R$ is an irreducible numerical semigroup. 
 Recall that, in the classical case $\kappa[\![\bu]\!]/R$, the algebra has a single Frobenius
monomial. 
\begin{example}
Let $R=\kappa[\![\bu^5,\bu^7,\bu^9]\!]$ and $R'=R[\![\bu^8]\!]$. Then $\G(R'/R)=\{\bu^8,\bu^{13}\}$.
Any  non-trivial extension of $R$ must contain
$\bu^{13}$. Therefore $R$ is irreducible as a coefficient ring of $R'$. Let $R''=R'[\![\bu^{11}]\!]$. Then
$\G(R''/R)=\{\bu^8,\bu^{11},\bu^{13}\}$ and $R[\![\bu^{11}]\!]\cap R[\![\bu^{13}]\!]=R$. 
Therefore $R$ is not irreducible as a coefficient ring of $R''$.
\end{example}

\begin{example}
Let $R=\kappa[\![\bu^8,\bu^{12},\bu^{19},\bu^{21}]\!]$. The algebra $R[\![\bu^{22}]\!]/R$ given in
Example~\ref{ex:97887} has gap monomials $\bu^{22},\bu^{30},\bu^{34}$. Since
$R=R[\![\bu^{30}]\!]\cap R[\![\bu^{34}]\!]$, it is not irreducible.  \end{example}

\begin{lem}\label{lem:single}
Let $R'/R$ be an equi-gcd numerical semigroup algebra. If the coefficient ring $R$ is irreducible,
then $R'/R$ has a single Frobenius monomial.
\end{lem}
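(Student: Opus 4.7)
The plan is to prove the contrapositive: if $R'/R$ admits two distinct Frobenius monomials $\bu^{w_1}$ and $\bu^{w_2}$, I will produce two coefficient rings $R_1,R_2$ with $R\subsetneq R_i\subseteq R'$ and $R_1\cap R_2=R$, exhibiting a forbidden decomposition. The candidates force themselves: for $i=1,2$ set $R_i:=R+\kappa\,\bu^{w_i}$, adjoining to $R$ only the Frobenius monomial itself.

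The substantive step is to check that each $R_i$ is actually a subring of $R'$, equivalently that $\log_\bu R\cup\{w_i\}$ is a sub-semigroup of $\log_\bu R'$. This amounts to showing (i) $\bu^{w_i+s}\in R$ for every $\bu^s\in R$ with $s>0$, and (ii) $\bu^{2w_i}\in R$. Both are consequences of the $\preceq_{R'}$-maximality in the definition of a Frobenius monomial: in each case the product lies in $R'$ and satisfies $\bu^{w_i}\preceq_{R'}\bu^{w_i+s}$ (resp.\ $\bu^{w_i}\preceq_{R'}\bu^{2w_i}$) because the quotient is a monomial of $R'$, so if the product were itself a gap monomial, maximality would force it to equal $\bu^{w_i}$, which is absurd for $s>0$ and for $w_i>0$ respectively. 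Since $\bu^{w_i}\in R'\setminus R$, each $R_i$ is a non-trivial extension of $R$ inside $R'$.

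It remains to verify $R_1\cap R_2=R$. As a $\kappa$-vector space, $R'$ has the monomial basis $\{\bu^s:s\in\log_\bu R'\}$, and in particular the sum $R+\kappa\,\bu^{w_1}+\kappa\,\bu^{w_2}$ is direct. An element of $R_1\cap R_2$ may be written both as $a+b\,\bu^{w_1}$ and as $c+d\,\bu^{w_2}$ with $a,c\in R$ and $b,d\in\kappa$; directness forces $b=d=0$, and the element lies in $R$. This contradicts irreducibility. No serious obstacle arises; the only delicate point is the closure property for $R_i$, which is exactly what the $\preceq_{R'}$-maximality supplies, and it is precisely where the same approach would break if one had only pseudo-Frobenius monomials at hand, since $\preceq_R$-maximality alone does not control $\bu^{2w_i}$.
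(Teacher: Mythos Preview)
Your proof is correct and follows essentially the same approach as the paper's. The paper writes $R[\![\bu^{w_i}]\!]$ for your $R_i=R+\kappa\,\bu^{w_i}$ and summarizes your closure verification by the single clause ``hence $\bu^{2w}\in R$ and $\G(R[\![\bu^w]\!]/R)=\{\bu^w\}$''; your more explicit unpacking of why $\preceq_{R'}$-maximality (rather than mere $\preceq_R$-maximality) is needed is a nice touch but not a different idea.
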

\begin{proof}
A Frobenius monomial $\bu^w$ of $R'/R$ is maximal in $\G(R'/R)$ with respect to $\preceq_{R'}$,
hence $\bu^{2w}\in R$ and $\G(R[\![\bu^w]\!]/R)=\{\bu^w\}$. If $R'/R$ has two distinct
Frobenius monomials $\bu^{w_1}$ and $\bu^{w_2}$, then $R=R[\![\bu^{w_1}]\!]\cap R[\![\bu^{w_2}]\!]$ 
is not irreducible.
\end{proof}

The following criterion for irreducibility extends the classical case \cite{Rosales-Branco-2003}.

\begin{prop}\label{prop:max}
Let $\bu^w$ be a gap monomial of an equi-gcd numerical semigroup algebra $R'/R$. 
The following conditions are equivalent .
\begin{itemize}
\item
$R$ is irreducible with $\bu^w$ as the single Frobenius monomial.
\item
$R$ is maximal with respect to set inclusion
among coefficient rings with $\bu^w$ as the single Frobenius monomial. 
\item
$R$ is maximal with respect to set inclusion
among coefficient rings over which $\bu^w$ is a Frobenius monomial. 
\end{itemize}
\end{prop}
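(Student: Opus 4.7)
The plan is to prove the cycle $(1) \Rightarrow (3)$, $(3) \Rightarrow (1)$, together with the easy equivalence $(2) \Leftrightarrow (3)$. The technical workhorse is the following observation, used repeatedly: if $\tilde R$ is a coefficient ring maximal (under inclusion) among those containing $R$ but avoiding $\bu^w$---such $\tilde R$ exists since $\G(R'/R)$ is finite by Proposition~\ref{ff}---then $\bu^w$ is the unique Frobenius monomial of $R'/\tilde R$. The reason is a degree argument: for any $\bu^s \in R'$ with $s > 0$, every monomial in $\tilde R[\![\bu^{w+s}]\!]$ not already in $\tilde R$ has exponent at least $w + s > w$, so adjoining $\bu^{w+s}$ would still avoid $\bu^w$; hence maximality forces $\bu^{w+s} \in \tilde R$. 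The same reasoning shows every gap of $R'/\tilde R$ lies $\preceq_{R'} \bu^w$.

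For $(1) \Rightarrow (3)$, assuming (3) fails, pick $A \supsetneq R$ with $\bu^w$ as a Frobenius monomial of $R'/A$, so in particular $\bu^w \notin A$. The witness to non-irreducibility will be the small extension $B := R[\![\bu^w]\!]$, which is a proper extension since $\bu^w \notin R$. I claim $A \cap B = R$. Every monomial of $B$ has the form $\bu^{a + kw}$ with $\bu^a \in R$ and $k \geq 0$. The case $k = 0$ gives a monomial of $R$. For $k \geq 2$, iterating the Frobenius property $\bu^{w+s} \in R$ (valid for all $\bu^s \in R'$ with $s > 0$, since $\bu^w$ is Frobenius for $R'/R$) yields $\bu^{kw} \in R$ and hence $\bu^{a+kw} \in R$. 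For $k = 1$ with $a > 0$, the same property gives $\bu^{a+w} \in R$. The only remaining monomial of $B \setminus R$ is $\bu^w$ itself, which is excluded from $A$. Hence $A \cap B = R$ with $A, B$ both proper extensions, contradicting irreducibility.

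For $(3) \Rightarrow (1)$, apply the observation to $R$ itself: the maximal extension of $R$ avoiding $\bu^w$ would have $\bu^w$ as a Frobenius monomial of its quotient, so by (3) it must equal $R$; consequently $\bu^w$ is the unique Frobenius monomial of $R'/R$. To see $R$ is irreducible, suppose $R = A \cap B$ with $A, B \supsetneq R$. If $\bu^w \notin A$, then $\bu^{w+s} \in R \subseteq A$ for every $\bu^s \in R'$ with $s > 0$, so $\bu^w$ is a Frobenius monomial of $R'/A$, again contradicting (3). Hence $\bu^w \in A$, and by symmetry $\bu^w \in B$, forcing $\bu^w \in A \cap B = R$, which contradicts $\bu^w \in \G(R'/R)$. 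For $(2) \Leftrightarrow (3)$: the family in (2) is contained in that of (3), and (3) forces $\bu^w$ to be the single Frobenius of $R'/R$ as just shown, so $(3) \Rightarrow (2)$; conversely, any failure of (3) can be upgraded to a failure of (2) by extending the offending $A$ maximally without $\bu^w$ and invoking the key observation to obtain a single-Frobenius extension strictly larger than $R$.

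The most delicate point is verifying $A \cap B = R$ in the $(1) \Rightarrow (3)$ step. Its success hinges on $\bu^w$ being a genuine Frobenius monomial---not merely pseudo-Frobenius---of $R'/R$, because the calculation needs $\bu^{w+s} \in R$ for \emph{every} positive $\bu^s \in R'$, not only for monomials in $R$; without this strong property the higher-degree terms $\bu^{kw}$ and $\bu^{a+w}$ of $B$ could fall outside $R$ yet inside $A$, and the witness would collapse.
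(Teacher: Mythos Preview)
Your proof is correct and follows essentially the same approach as the paper: both arguments hinge on the fact that for a Frobenius monomial $\bu^w$ of $R'/R$ one has $\G(R[\![\bu^w]\!]/R)=\{\bu^w\}$, and both deduce irreducibility from maximality by showing every proper extension must contain $\bu^w$. Your presentation is more explicit (you spell out the monomial calculation for $B=R[\![\bu^w]\!]$ and package the ``maximal extension avoiding $\bu^w$'' argument as a standalone observation), whereas the paper compresses these into the remark that joining a Frobenius monomial creates only one extra monomial; one small imprecision is that your phrase ``the same reasoning shows every gap of $R'/\tilde R$ lies $\preceq_{R'}\bu^w$'' is not literally the degree argument---it needs the extra step that $\bu^w\in\tilde R[\![\bu^t]\!]$ forces $\bu^w=\bu^a(\bu^t)^k$ with $\bu^a\in\tilde R$, whence $\bu^{w-t}=\bu^a(\bu^t)^{k-1}\in R'$---but this claim is true and in any case you only use the weaker consequence that $\bu^w$ is the unique Frobenius monomial, which does follow directly from your degree argument.
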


\begin{proof}
 For a Frobenius monomial $\bu^s\in\F(R'/R)$, note that $\G(R[\![\bu^s]\!]/R)=\{\bu^s\}$. 
In other words, joining a Frobenius monomial to a coefficient ring creates only one extra 
monomial. Therefore the set of coefficient rings with $\bu^w$ as the single Frobenius monomial and
the set of coefficient rings over which $\bu^w$ is a Frobenius monomial have the same maximal
coefficient rings with respect to set inclusion.

Let $R_1=R[\![\bu^w]\!]$ and $R_2$ be an extension of $R$ such that  $\bu^w\in\F(R'/R_2)$.
 If $\F(R'/R)=\{\bu^w\}$, then $\G(R_1/R)=\{\bu^w\}$ and $R=R_1\cap R_2$. If $R$ is  furthermore irreducible, then $R_2=R$. 
 So we obtain maximality. Let $R_3$ and $R_4$ be proper extensions of $R$. If $R$ is maximal among 
 coefficient rings of $R'$ with $\bu^w$ as  the single Frobenius monomial, then $\bu^w\in R_3\cap R_4$ and hence
$R\neq R_3\cap R_4$. Therefore maximality implies irreducibility.
\end{proof}

For an equi-gcd numerical semigroup algebra $R'/R$ with a single Frobenius monomial $\bu^w$,
we would like to find an irreducible coefficient ring $R_1$ containing $R$ such that $\bu^w$ is the single
Frobenius monomial of $R'/R_1$. If $R$ is not irreducible,  it is not maximal and there is  a monomial $\bu^s$ contained in 
an extension of $R$ such that  $\bu^{ns}\neq\bu^w$ for any $n\in\mathbb N$. In other words, $\bu^s$ 
is not a radical of $\bu^w$. As a step to construct an irreducible coefficient ring, the following result
generalizes \cite[Lemma~3.2]{Rosales-1996}.

\begin{lem}\label{lem:const}
Let $R'/R$ be an equi-gcd numerical semigroup algebra with a single Frobenius monomial
$\bu^w$.  Given a pseudo-Frobenius monomial $\bu^s$, 
 if it is not a radical of $\bu^w$, then
$\bu^w$ is also the single Frobenius monomial of $R'/R[\![\bu^s]\!]$.
\end{lem}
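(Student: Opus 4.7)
The plan is to show that passing from $R$ to $R[\![\bu^s]\!]$ keeps $\bu^w$ a gap and still leaves it as the only Frobenius monomial. I would first record a useful reformulation of the pseudo-Frobenius hypothesis on $\bu^s$. Since $\bu^s\in R'$ and $R\subset R'$, for every $\bu^a\in R$ with $a\neq 0$ the product $\bu^{s+a}$ lies in $R'$; maximality of $\bu^s$ in $\G(R'/R)$ with respect to $\preceq_R$ then forces $\bu^{s+a}\in R$. Thus the pseudo-Frobenius condition translates to: $\bu^a\bu^s\in R$ for every $\bu^a\in R\setminus\{1\}$.

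The core step is to show $\bu^w\notin R[\![\bu^s]\!]$. Because $R[\![\bu^s]\!]$ is itself a numerical semigroup ring, with exponent semigroup $\langle\log_\bu R,\,s\rangle$, the assumption $\bu^w\in R[\![\bu^s]\!]$ is equivalent to $w=a+ns$ for some $a\in\log_\bu R$ and some $n\geq 0$. I would then split into three cases: $n=0$ gives $\bu^w\in R$, contradicting $\bu^w\in\G(R'/R)$; $n\geq 1$ and $a=0$ makes $\bu^s$ a radical of $\bu^w$, contradicting the hypothesis; and $n\geq 1$ with $a>0$ is ruled out by iterating the reformulated pseudo-Frobenius relation, namely $\bu^a\in R$ yields $\bu^{s+a}\in R$, which yields $\bu^{2s+a}\in R$, and so on until $\bu^{ns+a}=\bu^w\in R$, again a contradiction.

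With that established, the Frobenius property of $\bu^w$ in $R'/R[\![\bu^s]\!]$ is immediate: for any $\bu^b\in R'\setminus\{1\}$, the assumption that $\bu^w$ is the Frobenius monomial of $R'/R$ gives $\bu^{w+b}\in R\subset R[\![\bu^s]\!]$. For uniqueness, any other Frobenius monomial $\bu^t$ of $R'/R[\![\bu^s]\!]$ must satisfy $\bu^t\in\G(R'/R)$ and $t\neq w$. Since $\bu^w$ is the unique Frobenius monomial of $R'/R$, every other element of $\G(R'/R)$ lies below $\bu^w$ under $\preceq_{R'}$, so $\bu^{w-t}\in R'$ with $w-t\neq 0$. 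Applying the Frobenius property of $\bu^t$ in $R'/R[\![\bu^s]\!]$ then yields $\bu^w=\bu^t\bu^{w-t}\in R[\![\bu^s]\!]$, contradicting the previous step.

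I expect the principal obstacle to be the inductive argument in the case $n\geq 1$, $a>0$ of the core step; one must carefully check that the exponent fed into the pseudo-Frobenius relation at each stage is nonzero in order to apply the reformulation from the first step. Everything else reduces cleanly to the hypothesis that $\bu^w$ is the unique Frobenius monomial of $R'/R$.
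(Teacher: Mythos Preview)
Your proof is correct and follows essentially the same approach as the paper's: first use the pseudo-Frobenius hypothesis iteratively (together with the ``not a radical'' assumption) to show $\bu^w\notin R[\![\bu^s]\!]$, then use the inclusion $\G(R'/R[\![\bu^s]\!])\subset\G(R'/R)$ and the uniqueness of $\bu^w$ in $\F(R'/R)$ to conclude. The paper is terser---it folds your three-case split into a single line and observes directly that every gap monomial of $R'/R[\![\bu^s]\!]$ lies below $\bu^w$ under $\preceq_{R'}$, rather than arguing uniqueness by contradiction---but the substance is identical.
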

\begin{proof}
  For any $1\neq\bu^a\in R$, we have  $1\neq\bu^{s+a}\in R$ since $\bu^s\in\PF(R'/R)$. 
By induction, $\bu^{ns+a}\in R$ for $n\geq 1$.  Together with  the assumption, $\bu^w\not\in R[\![\bu^s]\!]$. 
 A gap monomial
$\bu^t$ of $R'/R[\![\bu^s]\!]$ is also a gap monomial of $R'/R$. Therefore $\bu^t\preceq_{R'}\bu^w$.
In other words, $\bu^w$ is the single Frobenius monomial of $R'/R[\![\bu^s]\!]$.
\end{proof}

\begin{thm}\label{thm:irr}
Let $R'/R$ be an equi-gcd numerical semigroup algebra in the variable $\bu$.  The coefficient ring
$R$ is irreducible if and only if  $\PF(R'/R)=\{\bu^w\}$ or $\PF(R'/R)=\{\bu^w,\bu^{w/2}\}$ for some
$\bu^w\in G(R'/R)$.  For CM-type one algebras over an irreducible coefficient ring, $|\G(R'/R)|=|\sN(R'/R)|$.
 For CM-type two algebras over an irreducible coefficient ring, $|\G(R'/R)|=|\sN(R'/R)|+1$.
\end{thm}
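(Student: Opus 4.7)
The plan is to derive the biconditional from Lemmas \ref{lem:single} and \ref{lem:const} combined with Proposition \ref{prop:max}, and to obtain the two cardinality statements by refining the bijection in the proof of Theorem \ref{thm:ume}.

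For the forward implication, Lemma \ref{lem:single} gives a single Frobenius monomial $\bu^w$. Lemma \ref{lem:const} together with the maximality criterion of Proposition \ref{prop:max} then forces every pseudo-Frobenius monomial other than $\bu^w$ to be a radical of $\bu^w$: otherwise $R[\![\bu^s]\!]$ would be a strict extension of $R$ over which $\bu^w$ is still the single Frobenius monomial, violating maximality. Thus each such pseudo-Frobenius has exponent $w/n$ for some integer $n \geq 2$. For any such $\bu^{w/n}$, the monomial $\bu^{(n-1)w/n}$ lies in $R'$ by the Frobenius property of $\bu^w$ but outside $R$ (otherwise $\bu^{(n-1)w/n} \cdot \bu^{w/n} = \bu^w$ would be in $R$), so it is a gap. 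It cannot be $\preceq_R$-dominated by $\bu^w$, since $\bu^{w-(n-1)w/n} = \bu^{w/n}$ is itself a gap; and dominance by another radical $\bu^{w/m}$ with $m \geq 2$ requires $m \leq n/(n-1)$, which forces $n = m = 2$. Hence for $n \geq 3$ the gap $\bu^{(n-1)w/n}$ is itself pseudo-Frobenius but is not a radical of $\bu^w$, a contradiction. So every pseudo-Frobenius radical must satisfy $n = 2$, leaving $\bu^{w/2}$ as the only possibility.

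For the converse, using Proposition \ref{prop:max} it suffices to show that $R$ is maximal among coefficient rings of $R'$ over which $\bu^w$ is a Frobenius monomial. Given any strict extension $R_1$ of $R$ inside $R'$, pick $\bu^s \in R_1 \setminus R$; this $\bu^s$ is a gap and is dominated by some element of $\PF(R'/R)$. If by $\bu^w$, then $\bu^{w-s} \in R \subset R_1$ and so $\bu^w \in R_1$. If only by $\bu^{w/2}$, then $\bu^{w/2} \in R_1$ and therefore $\bu^w = (\bu^{w/2})^2 \in R_1$. Either way $\bu^w$ is no longer a gap of $R'/R_1$, giving the required maximality.

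The CM-type one cardinality is immediate from Theorem \ref{thm:ume}. For CM-type two I construct the assignment $\bu^t \mapsto \bu^{w-t}$ from $\G(R'/R) \setminus \{\bu^{w/2}\}$ to $\sN(R'/R)$. The technical heart, and the main obstacle of the whole argument, is to show that every gap $\bu^t$ with $t \neq w/2$ is $\preceq_R$-dominated by $\bu^w$, not merely by $\bu^{w/2}$. If dominance by $\bu^w$ failed, then $\bu^{w-t}$ would itself be a gap, and examining which element of $\PF(R'/R)$ dominates $\bu^{w-t}$ would force both $t \leq w/2$ and $t \geq w/2$, collapsing to the excluded case $t = w/2$. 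Once this refined dominance is in hand, the map is well-defined and injective, and surjectivity follows by the same argument as in the proof of Theorem \ref{thm:ume}, yielding $|\G(R'/R)| = |\sN(R'/R)| + 1$.
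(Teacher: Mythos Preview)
Your proof is correct and follows essentially the same route as the paper: Lemma~\ref{lem:single}, Proposition~\ref{prop:max}, and Lemma~\ref{lem:const} for the forward direction, and the map $\bu^t\mapsto\bu^{w-t}$ for the cardinality claims. The only noticeable difference is cosmetic: where the paper shows $\bu^{w-s}\in R$ directly in the CM-type two case (using that $\bu^{(w/2)-s}$ is a nontrivial element of $R$, so the pseudo-Frobenius property of $\bu^{w/2}$ gives $\bu^{w-s}=\bu^{(w/2)-s}\bu^{w/2}\in R$), you obtain it by contradiction; both are fine.
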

\begin{proof}
Consider the case that the coefficient ring $R$ is irreducible. By Lemma~\ref{lem:single},
$R'/R$ has a single Frobenius monomial $\bu^w$. It is shown in Proposition~\ref{prop:max}
that $R$ is maximal among numerical semigroup subrings of $R'$ in the variable $\bu$ with $\bu^w$
as  the single Frobenius monomial. Assume that there is a pseudo-Frobenius monomial $\bu^s$ other than $\bu^w$. By
Lemma~\ref{lem:const}, $w=ns$ for some $n\in\mathbb N$. Since $\bu^w$ is the unique Frobenius monomial,
$\bu^{ns-s}\in R'$. Since $\bu^{ns}\not\in R$ and $\bu^s$ is pseudo-Frobenius, the monomial $\bu^{ns-s}$
outside $R$ is also pseudo-Frobenius. By Lemma~\ref{lem:const} again, $n-1$ divides $n$. This implies that
$n=2$.

Consider the case that $\PF(R'/R)=\{\bu^w\}$. It is shown in Theorem~\ref{thm:ume} that
$|\G(R'/R)|=|\sN(R'/R)|$. Given $\bu^s\in\G(R'/R)$, we obtain $\bu^{w-s}\in R$. Therefore any subring of $R'$
properly containing $R$ has to contain $\bu^w$. By maximality, the coefficient ring $R$ is irreducible.

Consider the case that $\PF(R'/R)=\{\bu^w,\bu^{w/2}\}$.
Given $\bu^s\in\G(R'/R)$ not equal $\bu^{w/2}$, either $\bu^s\preceq_R\bu^w$ or
$\bu^s\preceq_R\bu^{w/2}$. If $\bu^s\preceq_R\bu^w$, then $\bu^{w-s}\in R$.
If $\bu^s\preceq_R\bu^{w/2}$, then $1\neq\bu^{(w/2)-s}\in R$ and hence
$\bu^{w-s}=\bu^{(w/2)-s}\bu^{w/2}\in R$. In either cases, $\bu^{w-s}\in R$.
But $\bu^{w-s}\not\in\ff(R'/R)$, since $\bu^w=\bu^s\bu^{w-s}\not\in R$. So we have an one-to-one map
$\G(R'/R)\setminus\{\bu^{w/2}\}\to\sN(R'/R)$ given by $\bu^s\mapsto\bu^{w-s}$.
As seen in the proof of Theorem~\ref{thm:ume}, $\bu^{w-t}\in\G(R'/R)$ for any $\bu^t\in N(R'/R)$.
Therefore the one-to-one map is also onto, whence the identity $|\G(R'/R)|=|\sN(R'/R)|+1$.
Given $\bu^s\in\G(R'/R)$ not equal $\bu^{w/2}$, we obtain $\bu^{w-s}\in R$.
As the CM-type one case, any subring of $R'$
properly containing $R$ has to contain $\bu^w$. By maximality, the coefficient ring $R$ is irreducible.
\end{proof}

\begin{example}
Let $R=\kappa[\![\bu^{n+1},\bu^{n+2},\ldots,\bu^{2n-1},\bu^{2n+1}]\!]$ and $R'=R[\![\bu^{n}]\!]$, 
 where $n>1$. If $n=2$, then $\G(R'/R)=\{\bu^2,\bu^4,\bu^7\}$ and $\PF(R'/R)=\{\bu^7\}$.
If $n>2$, then
 $\PF(R'/R)=\G(R'/R)=\{\bu^n,\bu^{2n}\}$. 
The coefficient ring $R$ is irreducible. 
\end{example}

%%%%%%%%%%%%%%%%%%%%%%%%%%%%%%%%%%%%%%%%%%%%%%%%%%

\section{Symmetries}\label{sec:sym}

%%%%%%%%%%%%%%%%%%%%%%%%%%%%%%%%%%%%%%%%%%%%%%%%%%

 Symmetry for a set can be often regarded as a phenomenon observed from one-to-one 
correspondence of elements in certain subsets determined by a simple rule.  Besides symmetric numerical semigroups that have been observed and well-studied, there are also pseudo-symmetry and almost symmetry properties standing out as variations. These 
notions of symmetry have a relative nature. Let $R'/R$ be an equi-gcd numerical semigroup algebra in the 
variable $\bu$. In view of Theorem~\ref{thm:ume}, symmetry of the coefficient ring $R$ is about
correspondence of elements in $\G(R'/R)$ and $\sN(R'/R)$. Pseudo-symmetry is about
correspondence of elements in $\G(R'/R)\setminus\{\bu^{w/2}\}$ and $\sN(R'/R)$, where
$\bu^w$ occurs as the only Frobenius monomial of $R'/R$. To be precise, we call the coefficient 
ring $R$ {\em symmetric}, if $\PF(R'/R)=\{\bu^w\}$. We  call $R$  {\em pseudo-symmetric},  if
$\PF(R'/R)=\{\bu^w,\bu^{w/2}\}$.  As a generalization of \cite{Rosales-Branco-2003}, Theorem~\ref{thm:irr}  asserts that a coefficient ring 
is irreducible if and only if it is symmetric or pseudo-symmetric. In this section, we consider almost 
symmetry for equi-gcd numerical semigroup algebras.

As a generalization of Ap\'{e}ry numbers to the relative case, {\em Ap\'{e}ry monomials} of a 
numerical semigroup algebra $R'/R$ can be described using the order $\preceq_R$ as minimal elements among monomials in $R'$. Among  Ap\'{e}ry monomials, maximal elements with 
respect to the order $\preceq_{R'}$ are called {\em maximal Ap\'{e}ry monomials}. It is known that a flat numerical semigroup 
algebra $R'/R$ in the variable $\bu$ is Gorenstein if and only if it has a unique maximal Ap\'{e}ry monomial 
\cite[Theorem 3.1]{HK}. Let $\kappa[\![\bu^S]\!]$ be a numerical semigroup ring and $s\in S$. It is also
known that $\kappa[\![\bu^S]\!]$ is a Gorenstein ring if and only if $\kappa[\![\bu^S]\!]/\kappa[\![\bu^s]\!]$
is a Gorenstein algebra. 

Assume that $S\subset\N$ and $\kappa[\![\bu]\!]/\kappa[\![\bu^S]\!]$ is equi-gcd. Kunz's symmetry 
criterion for the Gorenstein property of the ring $\kappa[\![\bu^S]\!]$ is in fact a statement of the algebra 
$\kappa[\![\bu]\!]/\kappa[\![\bu^S]\!]$. Let $\bu^w$ be the Frobenius monomial. Kunz shows that 
$\kappa[\![\bu^S]\!]$ is Gorenstein if and only if the map $S\to\Z\setminus S$ given by $s\mapsto w-s$
is bijective \cite{Kunz-1970}. Equivalently, $\kappa[\![\bu^S]\!]$ is Gorenstein if and only if the map 
$\sN(\kappa[\![\bu]\!]/\kappa[\![\bu^S]\!])\to\G(\kappa[\![\bu]\!]/\kappa[\![\bu^S]\!])$ given by 
$\bu^s\mapsto \bu^{w-s}$ is bijective. On the other hand, the algebra $\kappa[\![\bu]\!]/\kappa[\![\bu^S]\!]$ can describe
maximal Ap\'{e}ry monomials of $\kappa[\![\bu^S]\!]/\kappa[\![\bu^s]\!]$.

\begin{lem}\label{prop:PFApr}
Let $R'/R$ be an equi-gcd numerical semigroup algebra in the variable $\bu$. Given $1\neq\bu^s\in R$, 
\[
\PF(R'/R)=\{\bu^{t-s} : \bu^t\in\max_{\preceq_R}\left(\Apr(R/\kappa[\![\bu^s]\!])\setminus\Apr(R'/\kappa[\![\bu^s]\!])\right)\}.
\]
\end{lem}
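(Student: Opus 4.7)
The plan is to unwind the Ap\'{e}ry definitions and realize both sides as images of $\G(R'/R)$ under compatible maps that differ by the shift $\bu^{s}$. Recall that $\bu^{t}$ lies in $\Apr(R/\kappa[\![\bu^{s}]\!])$ precisely when $\bu^{t}\in R$ and $\bu^{t-s}\notin R$, with the analogous characterization for $R'$. Since $R\subset R'$, any element of $\Apr(R/\kappa[\![\bu^{s}]\!])$ already lies in $R'$, so it drops out of $\Apr(R'/\kappa[\![\bu^{s}]\!])$ exactly when $\bu^{t-s}\in R'$. Combining the two conditions,
\[
\Apr(R/\kappa[\![\bu^{s}]\!])\setminus\Apr(R'/\kappa[\![\bu^{s}]\!])=\bigl\{\bu^{w+s}:\bu^{w}\in\G(R'/R),\ \bu^{w+s}\in R\bigr\}.
\]

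Next, I would observe that $\bu^{w}\mapsto\bu^{w+s}$ is an order-preserving bijection with respect to $\preceq_{R}$ from $\{\bu^{w}\in\G(R'/R):\bu^{w+s}\in R\}$ onto the displayed set, since $\bu^{(w'+s)-(w+s)}=\bu^{w'-w}$. The lemma therefore reduces to showing that, under this bijection, the maximal elements of the left-hand side coincide with $\PF(R'/R)$.

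For one direction, suppose $\bu^{w}\in\PF(R'/R)$. Because $1\neq\bu^{s}\in R$, the pseudo-Frobenius condition forces $\bu^{w+s}\in R$, placing $\bu^{w+s}$ in the displayed set. If $\bu^{w+s}\preceq_{R}\bu^{v+s}$ for another element $\bu^{v+s}$ of the set, then $\bu^{v-w}\in R\setminus\{1\}$ and $\bu^{v}\in\G(R'/R)$, contradicting maximality of $\bu^{w}$ in $\G(R'/R)$ under $\preceq_{R}$. Conversely, assume $\bu^{w+s}$ is maximal in the displayed set. If $\bu^{w}$ failed to be pseudo-Frobenius, some $1\neq\bu^{a}\in R$ would yield $\bu^{w+a}\in\G(R'/R)$, and then $\bu^{w+a+s}=\bu^{a}\cdot\bu^{w+s}\in R$ would strictly dominate $\bu^{w+s}$ in the set, a contradiction.

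Once the set difference is parametrized as above, the argument is purely formal. The only subtle point is correctly identifying which Ap\'{e}ry monomials of $R/\kappa[\![\bu^{s}]\!]$ survive in $\Apr(R'/\kappa[\![\bu^{s}]\!])$, i.e., tracking the membership of $\bu^{t-s}$ in $R$ versus $R'$; everything afterwards is a direct translation of the definition of pseudo-Frobenius monomial across the shift by $\bu^{s}$, and I foresee no serious obstacle beyond clean bookkeeping of the two partial orders.
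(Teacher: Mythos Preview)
Your proof is correct and follows essentially the same route as the paper: both arguments identify $\Apr(R/\kappa[\![\bu^{s}]\!])\setminus\Apr(R'/\kappa[\![\bu^{s}]\!])$ with $R\cap\bu^{s}\G(R'/R)$ and then check that its $\preceq_{R}$-maximal elements are exactly $\bu^{s}\PF(R'/R)$. The paper compresses your two-direction verification into the single remark that $\bu^{s}\PF(R'/R)\subset R$, so intersecting $\bu^{s}\G(R'/R)$ with $R$ does not alter the set of maximal elements; your explicit treatment of each inclusion is a faithful unpacking of that observation.
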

\begin{proof}
The set $\bu^s\PF(R'/R)=\{\bu^{s+v} : \bu^v\in\PF(R'/R)\}$ consists of exactly those elements in $\bu^s\G(R'/R)=\{\bu^{s+v} : \bu^v\in\G(R'/R)\}$ maximal with 
respect to the order $\preceq_R$. Note that $\bu^s\PF(R'/R)\subset R$. Hence the set 
$R\cap\bu^s\G(R'/R)$ has same maximal elements with $\bu^s\G(R'/R)$. The lemma follows
from the fact that
$R\cap\bu^s\G(R'/R)=\Apr(R/\kappa[\![\bu^s]\!])\setminus\Apr(R'/\kappa[\![\bu^s]\!])$.
\end{proof}

For the case that $R'=\kappa[\![\bu]\!]$ in Lemma~\ref{prop:PFApr}, the set  $\Apr(R/\kappa[\![\bu^s]\!])\setminus\Apr(R'/\kappa[\![\bu^s]\!])$ 
consists of all elements $\bu^t\in\Apr(R/\kappa[\![\bu^s]\!])$ such that $t>s$. 
Let $\bu^w$ be the Frobenius monomial of $R'/R$.
Then $\bu^{w+s}\in\Apr(R/\kappa[\![\bu^s]\!])$. Given 
$\bu^t\in\Apr(R/\kappa[\![\bu^s]\!])$, if $t<s$, then $\bu^t\preceq_R\bu^{w+s}$.
So we recover \cite[Proposition~2.20]{RG} that
\[
\PF(\kappa[\![\bu]\!]/R)=\{\bu^{t-s} : \bu^t\in\max_{\preceq_R}\left(\Apr(R/\kappa[\![\bu^s]\!])\right)\}.
\]
In particular, $R/\kappa[\![\bu^s]\!]$ has a unique maximal Ap\'{e}ry monomial if and only if
$\kappa[\![\bu]\!]/R$ has a unique pseudo-Frobenius monomial.

 Note that maximal Ap\'{e}ry monomials of $R'/R$ can be characterized as those Ap\'{e}ry monomial 
$\bu^w$ such that $\bu^w\bu^{w'}$ is not Ap\'{e}ry for any non-trivial Ap\'{e}ry monomial $\bu^{w'}$. 
As an analogue, a pseudo-Frobenius monomial $\bu^w$ is called {\em maximal}, if $\bu^w\bu^{w'}$ is not 
pseudo-Frobenius for any pseudo-Frobenius monomial $\bu^{w'}$. Generalizing symmetry, 
 the coefficient ring $R$  is called {\em almost symmetric}  in $R'$, if the algebra has a unique maximal pseudo-Frobenius 
monomial.

When $R'=\kappa[\![\bu]\!]$, the coefficient ring $R$ is almost symmetric precisely when 
$\log_\bu R$ is  almost symmetric as it is defined in \cite{Barucci-Froberg-1997}.
Generalizing \cite[Theorem~2.4]{Nari-2013}, the next proposition justifies
our terminology.  Note that, over an almost symmetric coefficient ring, a numerical semigroup 
algebra has a  unique Frobenius monomial. 
By Theorem~\ref{thm:irr}, an irreducible coefficient ring of an equi-gcd numerical semigroup 
algebra is almost symmetric.  Therefore our notions extend the classical hierarchy that almost 
symmetric numerical semigroups 
include symmetric numerical semigroups and pseudo-symmetric numerical semigroups.

\begin{prop}\label{prop:almost-sym}
Let $R'/R$ be an equi-gcd numerical semigroup algebra with a single Frobenius monomial $\bu^w$. Write 
$\PF(R'/R)\setminus\{\bu^w\}=\{\bu^{t_1},\dots,\bu^{t_{n-1}}\}$, where $t_1<\cdots<t_{n-1}$. The following statements are equivalent.
\begin{itemize}
	\item $R$ is almost symmetric in $R'$.
		\item $\bu^{t_i}\bu^{t_{n-i}}=\bu^w$ for $i=1,\ldots,n-1$.
	\item $|\G(R'/R)|=|\sN(R'/R)|+n-1$.
\end{itemize}
\end{prop}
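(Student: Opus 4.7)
The plan is to route all three equivalences through the auxiliary set
$U:=\{\bu^t\in\G(R'/R):\bu^{w-t}\in\G(R'/R)\}$
of self-paired gap monomials. I would begin by recording two observations. First, every non-Frobenius pseudo-Frobenius monomial $\bu^{t_i}$ lies in $U$: $\bu^{w-t_i}\in R'$ since $\bu^w$ is the single Frobenius, and $\bu^{w-t_i}\in R$ would force $\bu^w=\bu^{t_i}\bu^{w-t_i}\in R$ by the pseudo-Frobeniusness of $\bu^{t_i}$. Second, repeating the argument of Theorem~\ref{thm:ume}, the map $\bu^s\mapsto\bu^{w-s}$ is a bijection $\sN(R'/R)\to\G(R'/R)\setminus U$, so
\[
|\G(R'/R)|-|\sN(R'/R)|=|U|,
\]
and $\bu^t\mapsto\bu^{w-t}$ is an order-reversing involution on $U$.

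With this setup, condition~(3) is equivalent to $|U|=n-1$, and since $\PF(R'/R)\setminus\{\bu^w\}\subseteq U$ always, this is in turn equivalent to $U=\PF(R'/R)\setminus\{\bu^w\}$. For $(2)\Rightarrow(3)$, I would take $\bu^t\in U$, climb $\preceq_R$-chains in $\G(R'/R)$ from both $\bu^t$ and $\bu^{w-t}$ to pseudo-Frobenius monomials $\bu^{t_i}$ and $\bu^{t_j}$ (necessarily with $i,j<n$, else the landing point would be $\bu^w$, contradicting $\bu^t\in U$), rewrite $\bu^{t_j}=\bu^{w-t_{n-j}}$ using~(2) to get $\bu^{t_{n-j}}\preceq_R\bu^t\preceq_R\bu^{t_i}$, and invoke $\preceq_R$-maximality of the pseudo-Frobenius endpoints to conclude $\bu^t=\bu^{t_i}$. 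Conversely, $(3)\Rightarrow(2)$ follows because $|U|=n-1$ forces $U=\{\bu^{t_1},\ldots,\bu^{t_{n-1}}\}$, and an order-reversing involution on an increasing sequence must pair $t_i$ with $t_{n-i}$.

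The implication $(2)\Rightarrow(1)$ is immediate: $\bu^{t_i}\bu^{t_{n-i}}=\bu^w$ exhibits each $\bu^{t_i}$ as non-maximal, while $\bu^w$ is always a maximal pseudo-Frobenius since $\bu^{w+v}\preceq_{R'}\bu^w$ would require $\bu^{-v}\in R'$. The main step is $(1)\Rightarrow(3)$, which I would establish by showing $U\subseteq\PF(R'/R)\setminus\{\bu^w\}$. Given $\bu^t\in U$, climb a $\preceq_R$-chain in $\G(R'/R)$ to a pseudo-Frobenius $\bu^{t_{k_0}}$ with $k_0<n$. By~(1), some pseudo-Frobenius $\bu^{t_{j_0}}$ makes $\bu^{t_{k_1}}:=\bu^{t_{k_0}+t_{j_0}}$ pseudo-Frobenius; the pseudo-Frobeniusness of $\bu^{t_{j_0}}$ applied to $\bu^{t_{k_0}-t}\in R\setminus\{1\}$ yields $\bu^t\preceq_R\bu^{t_{k_1}}$. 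Iterating produces a strictly increasing sequence of pseudo-Frobenius exponents $t_{k_0}<t_{k_1}<\cdots$, all bounded above by $w$ and all distinct from $w$ (again by the $U$-obstruction), so by finiteness of $\PF(R'/R)$ the process must stabilize at some $\bu^{t_{k_r}}=\bu^t$, whence $\bu^t\in\PF(R'/R)$.

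The hard part is the iteration in $(1)\Rightarrow(3)$: one must verify that the relation $\bu^t\preceq_R\bu^{t_{k_r}}$ is preserved under the passage $\bu^{t_{k_r}}\mapsto\bu^{t_{k_{r+1}}}$, which is precisely where the pseudo-Frobeniusness of the freshly supplied $\bu^{t_{j_r}}$ is invoked, and to confirm that the iteration can neither reach $\bu^w$ (blocked by $\bu^t\in U$) nor continue beyond the $n-1$ available non-Frobenius pseudo-Frobenius monomials. This combinatorial squeeze is the reason the a priori weaker non-maximality hypothesis~(1) suffices to force the stronger pairing~(2).
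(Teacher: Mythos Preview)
Your proof is correct, and the organization around the set $U$ of self-paired gap monomials is a clean way to package the equivalence $(2)\Leftrightarrow(3)$. This is essentially dual to the paper's approach, which instead studies the map $\varphi:\sN(R'/R)\cup(\PF(R'/R)\setminus\{\bu^w\})\to\G(R'/R)$, $\bu^s\mapsto\bu^{w-s}$, and identifies (3) with surjectivity of $\varphi$. Your $U$ is precisely the complement of $\varphi(\sN(R'/R))$ in $\G(R'/R)$, so the two bookkeeping devices carry the same information.

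The genuine difference is in handling condition (1). The paper proves $(1)\Rightarrow(2)$ directly: starting from any $\bu^{t_i}$, it iteratively multiplies by pseudo-Frobenius monomials and argues that the resulting increasing chain of pseudo-Frobenius elements must terminate at $\bu^w$ (the unique maximal one), yielding $\bu^{t_i}\bu^{t_j}=\bu^w$ for some $j$. You instead prove $(1)\Rightarrow(3)$ by contradiction: starting from a hypothetical $\bu^t\in U\setminus\PF(R'/R)$, you build the same kind of chain but observe it can \emph{never} reach $\bu^w$ (by the $U$-obstruction $\bu^{w-t}\notin R$), forcing an infinite strictly increasing sequence inside the finite set $\PF(R'/R)\setminus\{\bu^w\}$. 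Both routes work; the paper's is slightly more direct and immediately produces the pairing, while yours needs the separate $(3)\Rightarrow(2)$ step via the order-reversing involution.

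One phrasing issue in your $(1)\Rightarrow(3)$: the sentence ``the process must stabilize at some $\bu^{t_{k_r}}=\bu^t$'' is misleading. The sequence $t_{k_0}<t_{k_1}<\cdots$ is strictly increasing with $t\le t_{k_0}$, so it cannot return to $t$ after the first term. What you actually obtain is a contradiction (an infinite strictly increasing sequence in a finite set), which forces the initial assumption $\bu^t\neq\bu^{t_{k_0}}$ to fail; that is, $\bu^t$ was already pseudo-Frobenius at step $r=0$. The logic is sound, but ``stabilize'' obscures it---say instead that the contradiction rules out $\bu^t\notin\PF(R'/R)$.
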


\begin{proof}
If $\bu^{t_i}\bu^{t_{n-i}}=\bu^w$ for all $i$, then all $\bu^{t_i}$ are not maximal. With the only maximal 
pseudo-Frobenius monomial $\bu^w$, the  coefficient ring $R$ is almost symmetric.

Conversely, if $R$ is almost 
symmetric, the only maximal pseudo-Frobenius monomial has to be $\bu^w$. For each $\bu^{t_i}$, 
there is an $\bu^{t_{j_1}}$ such that $\bu^{t_i}\bu^{t_{j_1}}$ is pseudo-Frobenius. If the product is not 
yet maximal, there is an $\bu^{t_{j_2}}$ such that $\bu^{t_i}\bu^{t_{j_1}}\bu^{t_{j_2}}$ is 
pseudo-Frobenius. Since $\bu^{t_{j_1}}\bu^{t_{j_2}}\not\in R$ and $\bu^s\bu^{t_{j_1}}\bu^{t_{j_2}}\in R$ 
for any $1\neq \bu^s\in R$, we obtain a pseudo-Frobenius monomial $\bu^{t_{j_1}+t_{j_2}}$. Keep
multiplying pseudo-Frobenius monomials, we reach some $\bu^{t_j}$ such that $\bu^{t_i}\bu^{t_j}=\bu^w$. 
Sorting the numbers $w-t_i$, we see $j=n-i$.

There is a map 
\[
\varphi:\sN(R'/R)\cup(\PF(R'/R)\setminus\{\bu^w\})\longrightarrow\G(R'/R)
\]
sending $\bu^s$ to $\bu^{w-s}$. Since the map is injective, 
\begin{equation}\label{eq:98515}
|\G(R'/R)|\geq |\sN(R'/R)|+n-1 
\end{equation}
and the equality holds if and only if $\varphi$ is surjective. The image of $\sN(R'/R)\setminus\{1\}$ 
is contained in $\G(R'/R)\setminus\PF(R'/R)$. If $\varphi$ is surjective, every $\bu^{t_i}$ is the 
image of some $\bu^{t_j}$. Taking the increasing sequence into account, this implies that $t_i+t_{n-i}=w$. 
Conversely, assume that $t_i+t_{n-i}=w$. Then $\PF(R'/R)$ is in the image of $\varphi$. To show  that
$\varphi$ is surjective, we take an element $\bu^v\in\G(R'/R)\setminus\PF(R'/R)$. Then 
$\bu^s\bu^v\in\PF(R'/R)$ for some $1\neq\bu^s\in R$. From the assumption, either $\bu^{s+v}=\bu^w$ 
or $\bu^{w-s-v}\in\PF(R'/R)$. Since $\bu^{w-v}=\bu^s\bu^{w-s-v}\in R$ and 
$\bu^{w-v}\bu^v=\bu^w\not\in R$, we obtain an element $\bu^{w-v}\in\sN(R'/R)$ such that
$\varphi(\bu^{w-v})=\bu^v$.
\end{proof}	
For a numerical semigroup algebra $R'/R$ with F-type one, $|\PF(R'/R)\setminus\F(R'/R)|$ provides 
a lower bound for $|\G(R'/R)|-|\sN(R'/R)|$.  The inequality does not hold in general. The number $|\G(R'/R)|-|\sN(R'/R)|$ 
is not even necessarily positive, when the F-type is greater than one.

\begin{example}
Let $R'=\kappa[\![\bu^{10},\bu^{11},\bu^{12},\bu^{13},\bu^{14},\bu^{15}]\!]$ and $R$ be its coefficient 
ring such that $\G(R'/R)=\{\bu^{12},\bu^{13},\bu^{23},\bu^{27}\}$. Then
$\sN(R'/R)=\{1,\bu^{10},\bu^{11},\bu^{14},\bu^{15}\}$  has more elements than $\G(R'/R)$. Here the F-type is two.
\end{example}

\begin{example}
For $n\in\mathbb N$, let $R=\kappa[\![\bu^{2n+1},\bu^{2n+2},\ldots,\bu^{4n+1}]\!]$ and
$R'=R[\![\bu^2]\!]$. Then $\PF(R'/R)=\G(R'/R)=\{\bu^2,\bu^4,\ldots,\bu^{2n}\}$ and
$\F(R'/R)=\{\bu^{2n}\}$. The  coefficient ring $R$ is almost symmetric. Note that  $R$ is irreducible if and only if $n\leq 2$.
\end{example}

%%%%%%%%%%%%%%%%%%%%%%%%%%%%%%%%%%%%%%%%%%%%%%%%%%

\section{Radicals}\label{sec:rad}

%%%%%%%%%%%%%%%%%%%%%%%%%%%%%%%%%%%%%%%%%%%%%%%%%%

Let $R''/R$ be a numerical semigroup algebra in the variable $\bu$. To illustrate the relative 
nature of an extension $R'$ of $R$, we often write $R'=R[\![\bu^{w_1},\ldots,\bu^{w_n}]\!]$ by 
joining $\bu^{w_1},\ldots,\bu^{w_n}\in\G(R''/R)$ to $R$. Clearly, $\G(R''/R)$ is the disjoint union 
of $\G(R''/R')$ and $\G(R'/R)$. If $\bu^{w_1},\ldots,\bu^{w_n}\in\F(R''/R)$, then 
$\G(R'/R)=\F(R'/R)=\{\bu^{w_1},\ldots,\bu^{w_n}\}$. Recall that a monomial $\bu^s\in R''$ is the 
$n$th radical of a monomial in $R$ if $\bu^{ns}\in R$. Besides trivial  extensions $R$ and $R''$, 
radicals of the coefficient ring $R$ provide more interesting examples of extensions. Given 
$n\in\N$, the extension of $R$ in $R''$ generated by all $n$th radicals of monomials of $R$ is 
called the $n$th {\em radical} of $R$ in $R''$. If $R''$ is understood from the context, we denote 
the $n$th radical by $\sqrt[n]{R}$.

For the case $R''=\kappa[\![\bu]\!]$,  radicals are investigated in the logarithmic form as quotients 
of the numerical semigroup  $\log_\bu R$. It is known that a numerical semigroup is one half of a 
symmetric numerical semigroup \cite{Rosales-GarciaSanchez-2008}. In fact, there are infinitely 
many such symmetric numerical semigroups \cite{Rosales-GarciaSanchez-2008-inf}. Even more, 
for each $n\geq 2$, a numerical semigroup can be expressed as quotients of infinitely many 
symmetric numerical semigroups by $n$ \cite[Theorem~5]{Swanson-2009}. For each $n\geq 3$, 
a numerical semigroup can be also expressed as quotients of infinitely many pseudo-symmetric 
numerical semigroups by $n$ \cite[Theorem~6]{Swanson-2009}. In this section, we show that  
the classical results in \cite{Swanson-2009} have  a relative nature.

\begin{example}
Let $R=\kappa[\![\bu^6,\bu^{10},\bu^{14},\bu^{2n+1}]\!]$, where $n\in\N$. Then 
$\sqrt[2]{R}=\kappa[\![\bu^3,\bu^5,\bu^7]\!]$ in $\kappa[\![\bu^3,\bu^4,\bu^5]\!]$. 
\end{example}

Numerical duplications are a source of radicals of coefficient rings. They are first introduced in 
the logarithmic form~\cite{Duplication}. In our notation, a  {\em numerical duplication} of a 
numerical semigroup ring $R$ is the numerical semigroup ring 
$R'=R[\![\bu^{s_1}\bu^{s_0/2},\ldots,\bu^{s_n}\bu^{s_0/2}]\!]$, where 
$\bu^{s_0},\ldots,\bu^{s_n}\in R$ but $\bu^{s_0/2}\not\in R$. In $R'$, we have $\sqrt[2]{R}=R'$.
Flatness of numerical semigroup algebras   is a main theme for the study of singularities.
However, as seen in the following proposition, there are no extensions of the coefficient ring
other than radicals in a flat algebra.

\begin{prop}
Let $R''/R$ be a numerical semigroup algebra and let $R'$ be an extension of the coefficient ring $R$.
If $R''/R'$ and $R''/R$ are both flat, then $R'$ is a radical of $R$.
\end{prop}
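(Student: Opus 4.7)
The plan is to identify $R'$ as the $n$-th radical of $R$ in $R''$ for $n := \gcd(\log_\bu R)/\gcd(\log_\bu R')$. Write $S := \log_\bu R$, $S' := \log_\bu R'$, $S'' := \log_\bu R''$ and $d$, $d'$, $d''$ for their respective gcds, so that $n = d/d'$.

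First, I would deduce that $R'/R$ is itself flat: since $R'$ is a Noetherian local ring and $R''$ is a nonzero $R'$-flat module, $R''/R'$ is faithfully flat, so flatness of $R \to R''$ descends through $R \to R' \to R''$. Each of these extensions is module-finite (each coefficient ring sits inside the same Laurent-series DVR up to finite degree), and flat plus module-finite over a local ring means free. Computing fraction-field degrees gives $R''$ of rank $p := d/d''$ over $R$, of rank $q := d'/d''$ over $R'$, and $R'$ of rank $n = p/q$ over $R$. By Nakayama applied to a minimal monomial generating set, free bases of monomials exist, translating into disjoint Apéry-type decompositions
\[
S' = \bigsqcup_{i=0}^{n-1}(S + v_i), \qquad S'' = \bigsqcup_{j=0}^{q-1}(S' + w'_j),
\]
with $v_0 = w'_0 = 0$. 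Linear independence over $\mathrm{Frac}(R) = \kappa(\!(\bu^d)\!)$ forces the $v_i$'s to be distinct modulo $d$: if $v_i \equiv v_j \pmod d$, then $\bu^{v_i - v_j} \in \mathrm{Frac}(R)$ and the basis monomials are $\mathrm{Frac}(R)$-proportional. Since each $v_i \in d'\mathbb{Z}$, the $v_i$'s exhaust the cyclic group $d'\mathbb{Z}/d\mathbb{Z} \cong \mathbb{Z}/n\mathbb{Z}$; likewise the $w'_j$'s exhaust $d''\mathbb{Z}/d'\mathbb{Z} \cong \mathbb{Z}/q\mathbb{Z}$.

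For the inclusion $R' \subset \sqrt[n]{R}$: each sum $v_i + v_j \in S'$ lies in a unique coset $S + v_k$ with $v_k \equiv v_i + v_j \pmod d$, giving $v_i + v_j - v_k \in S$. Iterating this relation $n$ times yields $n v_i \in S + v_{ni \bmod n} = S$, so each $\bu^{v_i}$ is an $n$-th radical of a monomial in $R$ and $R' = R[\bu^{v_1},\ldots,\bu^{v_{n-1}}] \subset \sqrt[n]{R}$. For the reverse inclusion, take $s \in S''$ with $ns \in S$ and decompose $s = s' + w'_j$. Since $ns \in S \subset d\mathbb{Z}$ and $ns' \in n d'\mathbb{Z} = d\mathbb{Z}$, we find $n w'_j \in d\mathbb{Z}$. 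Writing $w'_j = d'' m_j$, this reduces to $q \mid m_j$; distinctness of the $m_j$ modulo $q$ then forces $j = 0$, whence $s = s' \in S'$.

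The main obstacle is the iterated coset-sum step, which converts the cyclic-group structure on $d'\mathbb{Z}/d\mathbb{Z}$ into the genuine semigroup identity $n v_i \in S$. Its decisive input is that the disjointness of the flat Apéry decomposition of $S'$ matches exactly distinctness of the $v_i$'s modulo $d$, a fact I would confirm via the fraction-field linear independence argument above.
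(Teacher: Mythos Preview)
Your argument is correct but takes a substantially different route from the paper's. The paper's proof is five lines: it invokes \cite[Proposition~3.5]{HJ}, which gives that flatness of $R''/R$ is equivalent to the saturation condition $R=\kappa[\![\bu^{d/d''}]\!]\cap R''$ (and similarly for $R''/R'$). With this in hand, both inclusions $R'\subset\sqrt[q]{R}$ and $\sqrt[q]{R}\subset R'$ for $q=d/d'$ are immediate: any $\bu^{sd'}\in R'$ satisfies $\bu^{sd}\in\kappa[\![\bu^{d/d''}]\!]\cap R''=R$, and conversely any $q$-th radical in $R''$ has exponent divisible by $d'$, hence lies in $\kappa[\![\bu^{d'/d''}]\!]\cap R''=R'$.

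You instead rebuild the machinery from scratch: you descend flatness to $R'/R$, pass to monomial free bases via Nakayama, and then run a coset-bookkeeping argument through the Ap\'ery decompositions. Your iterated coset step amounts to showing that, in the partition $S'=\bigsqcup_i(S+v_i)$, the piece containing $nv_i$ is the one indexed by $v_0=0$, which is exactly the content of the saturation condition $S=S'\cap d\Z$ in disguise. So effectively you are reproving the needed consequence of \cite[Proposition~3.5]{HJ} in situ. The payoff is a self-contained argument (and the side observation that $R'/R$ is itself flat, which the paper does not state); the cost is length. Both approaches identify the same radical index $n=d/d'$ and are logically sound.

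One minor point: your justification that the extensions are module-finite (``each coefficient ring sits inside the same Laurent-series DVR up to finite degree'') is a bit imprecise, but the claim is correct---each ring sits between a Noether normalization $\kappa[\![\bu^a]\!]$ and the DVR $\kappa[\![\bu^{d''}]\!]$, which is finite free over $\kappa[\![\bu^a]\!]$.
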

\begin{proof}
We may assume that the numerical semigroup rings are in the variable $\bu$ and $\log_\bu R''\subset\N$.
Let $d''=\gcd(\log_\bu R'')$, $d'=\gcd(\log_\bu R')$ and $d=\gcd(\log_\bu R)$. 
An element of $R'$ can be written
as $\bu^{sd'}$ for some $s\in\N$. Let $q=d/d'$. We claim $R'=\sqrt[q]{R}$. On the one hand, 
$\bu^{sd'q}=\bu^{sd}\in\kappa[\![\bu^{d/d''}]\!]\cap R''=R$,
where the last equality holds by flatness of $R''/R$  \cite[Proposition 3.5]{HJ}.
Therefore $R'\subset\sqrt[q]{R}$. On the other hand, consider $\bu^{sd'}\in\sqrt[q]{R}$. Then $\bu^{sd}\in R$
and hence $s\in\N$. The inclusion $\sqrt[q]{R}\subset R'$ follows from $\bu^{sd'}\in\kappa[\![\bu^{d'/d''}]\!]\cap R''=R'$,
where the last equality holds by flatness of $R''/R'$ [{\em ibid.}].
\end{proof}

Note that an equi-gcd algebra $\kappa[\![\bu]\!]/\kappa[\![\bu^S]\!]$ is never flat, unless $S=\N$. 
We can say more about radicals for equi-gcd numerical semigroup algebras in general.
Let $n\geq 2$ and $R''/R'$ be an equi-gcd numerical semigroup algebra in the variable $\bu$. 
Using a similar idea as \cite{Swanson-2009}, we will show that there are infinitely many equi-gcd 
numerical semigroup algebras $R''/R$ such that $R'=\sqrt[n]{R}$ and $R$ is symmetric in $R''$. 
We may assume that  $\log_\bu R''\subset\N$ and $\gcd(\log_\bu R'')=1$. In its logarithmic form, 
the condition $R'=\sqrt[n]{R}$ can be stated as $nS'=\log_\bu R\cap n\log_\bu R''$, where 
$S'=\log_\bu R'$. Since $R''/R$  is required to be equi-gcd, there  should be a monomial 
$\bu^h\in R''$ such that $\bu^s\in R$ for $s>h$.  To construct a symmetric coefficient ring $R$, 
we will choose an $h$ and  join to 
\[
R_0:=\kappa[\![\bu^{nS'},\bu^{h+1},\bu^{h+2},\dots,\bu^{2h+1}]\!]
\]
certain monomials $\bu^s\in R''$ satisfying $s\not\in n\log_\bu R''$. Note that $R''/R_0$ is 
equi-gcd. Denote $\{\bu^{w}\}=\F(\kappa[\![\bu]\!]/R')$. In other words, $w$ is the Frobenius 
number of $\log_\bu R'$. If  $h>w$, then $R_0\subset R'$.  If $h>nw$, then $nS'=\log_\bu R_0\cap n\log_\bu R''$.  
The algebra $\kappa[\![\bu]\!]/R'$ has finitely many gap monomials $\bu^{w_1},\bu^{w_2},\ldots$. 
 If $h>nw+w$, then
\[
R_1:=R_0[\![\bu^{h-nw_1},\bu^{h-nw_2},\ldots]\!]\subset R'.
\]
\begin{lem}
Assume that $h>2nw$. Then 
$\G(R_1/R_0)=\{\bu^{h-nw_1},\bu^{h-nw_2},\ldots\}$.
\end{lem}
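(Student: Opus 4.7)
The plan is to prove both inclusions, leaning on an explicit description of $\log_\bu R_0$ that I would establish first. Since the generators $h+1, h+2, \ldots, 2h+1$ form $h+1$ consecutive integers starting at $h+1$, they generate every integer $\geq h+1$ as a numerical semigroup. Adjoining $nS'$ then yields
\[
\log_\bu R_0 \;=\; nS' \,\cup\, \{m \in \N : m \geq h+1\},
\]
since any sum $ns + m$ with $s \in S'$, $s>0$, and $m \geq h+1$ already lies above the threshold $h+1$.

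For the inclusion $\{\bu^{h-nw_i}\} \subset \G(R_1/R_0)$, each $\bu^{h-nw_i}$ lies in $R_1$ by construction. To rule out membership in $R_0$, note that $w_i \geq 1$ forces $h-nw_i \leq h-n < h+1$, so $h-nw_i$ is not in the upper block. The implicit condition from the construction is that each joined exponent satisfies $h - nw_i \notin n\log_\bu R''$; for $h$ large this forces $n \nmid h$, since otherwise $(h - nw_i)/n$ would exceed the Frobenius number of $\log_\bu R''$ and automatically lie in $\log_\bu R''$. Consequently $h - nw_i$ is not a multiple of $n$ and cannot lie in $nS'$.

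For the reverse inclusion, take $s \in \log_\bu R_1 \setminus \log_\bu R_0$ and write $s = t + \sum_i c_i (h - nw_i)$ with $t \in \log_\bu R_0 \cup \{0\}$ and non-negative integers $c_i$. Set $C = \sum_i c_i$. If $C = 0$ then $s \in \log_\bu R_0$, a contradiction. If $C \geq 2$, then $s \geq 2(h - nw) \geq h+1$ by the hypothesis $h > 2nw$ (which gives $h \geq 2nw+1$), again forcing $s \in \log_\bu R_0$. So $C = 1$ and $s = t + (h - nw_i)$. If $t \geq h+1$, then $s \geq h+1 + (h-nw) \geq h+1$. If $t = nt'$ with $t' \in S'$: the case $t' = w_i$ is excluded since $w_i \notin S'$; when $t' > w_i$, $s = h + n(t' - w_i) \geq h+n \geq h+1$; when $t' = 0$, $s = h - nw_i$ is on the list; when $0 < t' < w_i$, the difference $w_i - t'$ must be a gap of $S'$ (otherwise $w_i = t' + (w_i - t') \in S'$), so $w_i - t' = w_j$ for some $j$ and $s = h - nw_j$ is again on the list.

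The main obstacle I anticipate is the subcase $0 < t' < w_i$, whose resolution rests on the observation that $w_i - t'$ is forced to be a gap of $S'$. The two hypotheses play complementary roles: $h > 2nw$ eliminates every multi-generator contribution by pushing it above $h+1$, while the implicit $n \nmid h$ prevents the lone new generators from being reabsorbed into $nS'$.
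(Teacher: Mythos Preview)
Your argument for the reverse inclusion $\G(R_1/R_0)\subset\{\bu^{h-nw_i}\}$ is correct and is essentially the paper's proof, just recast in additive language: the paper shows that a product of two new generators lands in $R_0$ (your case $C\geq 2$), that multiplying a new generator by $\bu^t$ with $t>h$ lands in $R_0$, and that multiplying by $\bu^{ns}$ with $s\in S'$ either lands in $R_0$ (your subcase $t'>w_i$) or returns another $\bu^{h-nw_j}$ (your subcase $0<t'<w_i$, via $w_i-t'$ being a gap of $S'$). Your explicit description $\log_\bu R_0=nS'\cup\{m\geq h+1\}$ is a clean way to organise this.

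Where you go beyond the paper is in the forward inclusion $\{\bu^{h-nw_i}\}\subset\G(R_1/R_0)$, and here your instinct is right but your presentation is slightly off. The lemma as stated assumes only $h>2nw$; the condition $n\nmid h$ is imposed \emph{after} the lemma in the paper. You correctly observe that without $n\nmid h$ the forward inclusion can fail (indeed, if $n\mid h$ then $h-nw_i=n(h/n-w_i)$ with $h/n-w_i>w$, so $\bu^{h-nw_i}\in R_0$ already). But rather than calling this an ``implicit condition from the construction,'' it is cleaner to note that the paper's proof establishes only the inclusion $\subset$, which is all that is used downstream; the equality in the lemma statement is a mild overstatement that becomes true once $n\nmid h$ is added. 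Your appeal to $h-nw_i\notin n\log_\bu R''$ is not quite the right criterion; what matters is $h-nw_i\notin nS'$.
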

\begin{proof}
The condition $h>2nw$ implies $2h-nw_i-nw_j>h$. Hence $\bu^{h-nw_i}\bu^{h-nw_j}\in R_0$.
 Monomials of $R_0$ are of the form $\bu^t$ or $\bu^{ns}$, where $t>h$ and $\bu^s\in R'$. 
 Clearly, $\bu^t\bu^{h-nw_i}\in R_0$. If $s>w_i$, then $ns+h-nw_i>h$ and hence $\bu^{ns}\bu^{h-nw_i}\in R_0$. If $s<w_i$, 
then $\bu^{w_i-s}\in\G(\kappa[\![\bu]\!]/R')$ and $\bu^{ns}\bu^{h-nw_i}=\bu^{h-nw_j}$ for some $j$. We 
conclude that joining $\bu^{h-nw_1},\ldots,\bu^{h-nw_m}$ to $R_0$ does not create other gap 
monomials.
\end{proof}
Now we assume furthermore $n\nmid h$ besides the condition $h>2nw$. Then $\bu^h\in\G(R'/R_1)$
and $nS'=\log_\bu R_1\cap n\log_\bu R''$.   Before joining to $R_1$ more monomials, we consider a monomial 
$\bu^s\in\G(R''/R')\subset\G(\kappa[\![\bu]\!]/R')$.  By the assumption $h>2nw$, we have
$h-s\geq h-w>w$ and hence $\bu^s\preceq_{R''}\bu^h$. This implies $\F(R''/R_1)\subset R'$.
Let $\bu^{t_1},\bu^{t_2},\ldots$ be those monomials in $\G(R'/R_1)$ satisfying 
$\bu^{t_i}\npreceq_{R''}\bu^h$. Define
\[
R_2:=R_1[\![\bu^{t_1},\bu^{t_2},\ldots]\!]\subset R'.
\]
Then $\G(R_2/R_1)=\{\bu^{t_1},\bu^{t_2},\ldots\}$. 
By the choice of $t_i$, the algebra $R''/R_2$ has a single Frobenius monomial $\bu^h$.

\begin{thm}
 Let $R''/R'$ be an equi-gcd numerical semigroup algebra. Given $n\geq 2$, the coefficient ring 
$R'$ is the $n$th radical of infinitely many coefficient ring $R$, which can be taken to be symmetric in $R''$. 
If $n\geq 3$, $R$ can be taken to be pseudo-symmetric. 
\end{thm}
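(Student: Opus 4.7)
The plan is to realize $R$ via an explicit construction, building on the $R_0\subset R_1\subset R_2$ setup preceding the theorem. Let $w$ be the Frobenius number of $S':=\log_\bu R'$. I fix a parameter $h\in\log_\bu R''$ with $h>2nw$ and $n\nmid h$; for the pseudo-symmetric conclusion I additionally require $h$ to be even, which is compatible with $n\nmid h$ only when $n\geq 3$. Since $\log_\bu R''$ is cofinite in $\N$, there are infinitely many admissible $h$ in either case, and distinct $h$ produce distinct coefficient rings because $\bu^h$ ends up being the Frobenius monomial of the resulting $R$.

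For the symmetric case, I would take
\[
\log_\bu R := nS' \;\cup\; \bigl(\log_\bu R''\cap(h,\infty)\bigr) \;\cup\; \{h-nw_i : w_i\text{ is a gap of }S'\},
\]
possibly augmented by the finitely many $s\in\log_\bu R''\cap[0,h]$ for which $h-s\notin\log_\bu R''$, so that $\bu^h$ becomes the unique Frobenius monomial of $R''/R$. For the pseudo-symmetric case I append the stratum $\{h/2+ns : s\in S',\, s>0\}$, which turns $\bu^{h/2}$ into a second pseudo-Frobenius monomial. In each case, closure under addition is a short case check using $h>2nw$, the inclusion $R\subset R'$ follows from $h-nw_i>w$, and the involution $s\mapsto h-s$ exhibits a bijection between $\log_\bu R\cap[0,h-1]$ and $\G(R''/R)$ (minus $\bu^{h/2}$ in the pseudo-symmetric case). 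By Theorem~\ref{thm:ume} (respectively Theorem~\ref{thm:irr}), the resulting $R$ is symmetric (respectively pseudo-symmetric).

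The main obstacle is verifying the radical identity $\sqrt[n]{R}=R'$. The inclusion $R'\subseteq\sqrt[n]{R}$ is immediate from $nS'\subset\log_\bu R$. For the converse, suppose $\bu^u\in R''$ with $nu\in\log_\bu R$: a stratum-by-stratum analysis shows $u\in S'$. Namely, $nu\in nS'$ forces $u\in S'$; $nu>h$ forces $u>h/n>2w$ and hence $u\in S'$; $nu=h-nw_i$ would force $n\mid h$, contradicting $n\nmid h$; and $nu=h/2+ns$ would force $2n\mid h$, which again contradicts $n\nmid h$. The most delicate point is that any augmentation used to secure a unique Frobenius must not import new elements of $n\log_\bu R''\setminus nS'$ into $\log_\bu R$; the bound $h>2nw$ is essential here because any $u\in\log_\bu R''\setminus S'$ satisfies $u\leq w$ and hence $nu\leq nw$, safely below the range in which the augmentation operates.
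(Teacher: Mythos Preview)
Your explicit construction coincides with the ring $R_2$ of the setup preceding the theorem: the set $nS'\cup(\log_\bu R''\cap(h,\infty))\cup\{h-nw_i\}$ is exactly $\log_\bu R_1$, and your ``augmentation'' is precisely the passage $R_1\to R_2$. It is true that $\F(R''/R_2)=\{\bu^h\}$, but the asserted bijection $s\mapsto h-s$ between $\log_\bu R\cap[0,h-1]$ and $\G(R''/R)$ fails in general, so $R_2$ need not be symmetric. The paper's own example with $n=3$ and $h=31$ shows this: there $\PF(R''/R_2)=\{\bu^{17},\bu^{20},\bu^{23},\bu^{26},\bu^{31}\}$, and concretely $\bu^{20}\in\G(R''/R_2)$ while $h-20=11\notin\log_\bu R_2$, so the involution is not onto. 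Your pseudo-symmetric stratum does not repair this, and in any case Theorem~\ref{thm:irr} derives the count $|\G|=|\sN|+1$ \emph{from} irreducibility rather than conversely, so even a correct cardinality would not yield pseudo-symmetry.

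The paper closes the gap not by refining the explicit description but by enlarging $R_2$ abstractly: Proposition~\ref{prop:max} produces an irreducible coefficient ring $R\supseteq R_2$ still having $\F(R''/R)=\{\bu^h\}$, and Theorem~\ref{thm:irr} then forces $\PF(R''/R)\subseteq\{\bu^h,\bu^{h/2}\}$. Choosing $h$ with $\bu^{h/2}\notin R''$ (respectively $\bu^{h/2}\in R''$) pins down the symmetric (respectively pseudo-symmetric) case. The radical identity $\sqrt[n]{R}=R'$ survives the enlargement by an argument independent of any stratum description of $R$: for each gap $w_i$ of $S'$ one has $\bu^{h-nw_i}\in R_1\subset R$, and since $\bu^h\notin R$ this forces $\bu^{nw_i}\notin R$, giving $\sqrt[n]{R}\subseteq R'$. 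Your stratum-by-stratum verification is valid for $R_2$ but does not transfer to this abstract $R$.
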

\begin{proof}
For $h\in\N$ satisfying $n\nmid h$ and  $h>2nw$, we construct  $R_1$ and $R_2$ as above so that 
$\F(R''/R_2)=\{\bu^h\}$. By Proposition~\ref{prop:max}, there is an irreducible coefficient ring 
$R$ of $R''$ extending $R_2$ 
 such that $\F(R''/R)=\{\bu^h\}$. 
Clearly, $\sqrt[n]{R}\supset\sqrt[n]{R_1}=R'$. 
On the other hand, if $\bu^{ns}\in R$
for some $\bu^s\in R''$,
then $\bu^s\in R'$; otherwise $\bu^{h-ns}\in R_1\subset R_2\subset R$ contradicting to $\bu^h\not\in R$.
Therefore $\sqrt[n]{R}=R'$.  In particular,  $R\subset R'$. Since $R$ is irreducible, 
 $\PF(R'/R)\subset\{\bu^h,\bu^{h/2}\}$ by Theorem~\ref{thm:irr}. If we choose $h$ such that 
$\bu^{h/2}\notin R''$, then $\PF(R''/R)=\{\bu^h\}$, which means $R$ is symmetric in $R''$. 
There are infinitely many such $h$. For instance, we may choose $h$ to be an odd integer.
If $\bu^{h/2}\in R''$, then $\bu^{h/2}\in \G(R''/R)$ and $\bu^{h/2}\npreceq_{R}\bu^h$. Therefore,  
$\PF(R'/R)=\{\bu^h,\bu^{h/2}\}$,  i.e. $R$ is pseudo-symmetric in $R''$.   
If  $n\geq 3$, there are infinitely many such $h$. For instance, we may choose
$h$ to be of the form $ns+2$, where $s\in\N$.
\end{proof}

\begin{example}
Consider $R'=\kappa[\![\bu^4,\bu^6,\bu^7,\bu^9]\!]$ and $R''=\kappa[\![\bu^4,\bu^5,\bu^6,\bu^7]\!]$. 
Then $\F(\kappa[\![\bu]\!]/R')=\{\bu^5\}$. In the above construction for $n=2$ and $h=21$, we have
\begin{eqnarray*}
R_0&=&\kappa[\![\bu^8,\bu^{12},\bu^{14},\bu^{18},\bu^{23},\bu^{25},\bu^{27},\bu^{29}]\!], \\
R_1&=&R_0[\![\bu^{11},\bu^{15},\bu^{17},\bu^{19}]\!]=
\kappa[\![\bu^8,\bu^{11},\bu^{12},\bu^{14},\bu^{15},\bu^{17},\bu^{18}]\!]
\end{eqnarray*} 
and $R_2=R_1$. Then $R=R_2$ is symmetric in $R''$ and $\sqrt[2]{R}=R'$.
For $n=3$ and $h=31$ we have 
\begin{eqnarray*}
	R_0&=&\kappa[\![\bu^{12},\bu^{18},\bu^{21},\bu^{27},\bu^{32},\bu^{34},\bu^{35},\bu^{37},\bu^{38},\bu^{40},\bu^{41},\bu^{43}]\!], \\
	R_1&=&R_0[\![\bu^{16},\bu^{22},\bu^{25},\bu^{28}]\!]=
\kappa[\![\bu^{12},\bu^{16},\bu^{18},\bu^{21},\bu^{22},\bu^{25},\bu^{27},\bu^{35}]\!],\\
R_2&=&R_1[\![\bu^{29}]\!]=\kappa[\![\bu^{12},\bu^{16},\bu^{18},\bu^{21},\bu^{22},\bu^{25},\bu^{27},\bu^{29},\bu^{35}]\!].
\end{eqnarray*} 
Note that $R_2$ is not symmetric in $R''$.  Indeed,
$\PF(R''/R_2)=\{\bu^{17}, \bu^{20}, \bu^{23},\bu^{26},\bu^{31}\}$.
However, $R=R_2[\![\bu^{17}, \bu^{20}, \bu^{23},\bu^{26}]\!]$ is symmetric in $R''$ and $\sqrt[3]{R}=R'$.

\end{example}

%%%%%%%%%%%%%%%%%%%%%%%%%%%%%%%%%%%%%%%%%%%%%%%%%%

\section{Fundamental Gap Monomials}\label{sec:fgm}

%%%%%%%%%%%%%%%%%%%%%%%%%%%%%%%%%%%%%%%%%%%%%%%%%%

Let $R''/R$ be a numerical semigroup algebra in the variable $\bu$. Intersections of radicals of $R$ have been studied in 
\cite{Ojeda-Rosales-2020} for the case $R''=\kappa[\![\bu]\!]$ under the 
name ``arithmetic extension'' from the logarithmic viewpoint. In this section, we exhibit the relative nature
of the connection between arithmetic extensions and fundamental gaps.

\begin{example}
Let $R=\kappa[\![\bu^{7},\bu^{8},\bu^{9},\bu^{10},\bu^{11},\bu^{13}]\!]$ and $R''=R[\![\bu^3,\bu^{4}]\!]$.
Then $\G(R''/R)=\{\bu^3,\bu^4,\bu^6,\bu^{12}\}$. The  extension
$R[\![\bu^{12}]\!]=R[\![\bu^4]\!]\cap R[\![\bu^3]\!]$  of $R$ is the intersection $\sqrt[2]{R}\cap\sqrt[3]{R}$ of radicals.
\end{example}

\begin{example}
Let $R=\kappa[\![\bu^5,\bu^6,\bu^{13}]\!]$ and $R''=R[\![\bu^7,\bu^8]\!]$. Then 
$\G(R''/R)=\{\bu^7,\bu^8,\bu^{14}\}$. The  coefficient ring $R$ has three non-trivial extensions
 in $R''$. The extensions $R[\![\bu^7]\!]$ and 
$R[\![\bu^{14}]\!]$ are not intersection of radicals, while $R[\![\bu^8]\!]=\sqrt[2]{R}$.
\end{example}

\begin{example}
The coefficient ring $\kappa[\![\bu^5,\bu^6]\!]$ of $\kappa[\![\bu^5,\bu^6,\bu^{19}]\!]$ has only
trivial extensions.
The extension $\kappa[\![\bu^5,\bu^6,\bu^{19}]\!]$ of $\kappa[\![\bu^5,\bu^6]\!]$ in $\kappa[\![\bu]\!]$ is not an intersection of radicals.
\end{example}

\begin{example}
The coefficient ring $\kappa[\![\bu^5,\bu^6,\bu^{14}]\!]$ of $\kappa[\![\bu^5,\bu^6,\bu^7,\bu^8]\!]$ has two
non-trivial extensions $\kappa[\![\bu^{5},\bu^6,\bu^7]\!]$ and $\kappa[\![\bu^5,\bu^6,\bu^8]\!]$.
 Both are not intersection of radicals.
\end{example}

Let $R'/R$ be an equi-gcd numerical semigroup algebra.   The elements in
\[
\FG(R'/R):=\G(R'/R)\cap\sqrt[2]{R}\cap\sqrt[3]{R}
\] 
are called the {\em fundamental gap monomials} of $R'/R$. The gap monomial with the largest exponent 
is a fundamental gap monomial. 
For the case $R'=\kappa[\![\bu]\!]$, the exponents of monomials in $\FG(R'/R)$ 
are the fundamental gaps of the numerical semigroup  $\log_\bu R$ first
introduced in \cite{Fundamental-2004}.  Note that  $\FG(R'/R)\subset\sqrt[n]{R}$
for $n\geq 2$. Therefore any intersection of radicals of $R$ contains $\FG(R'/R)$.

\begin{example}
Let $R=\kappa[\![\bu^5,\bu^6]\!]$ and $R'=R[\![\bu^7,\bu^8]\!]$. Then 
\[
\G(R'/R)=\{\bu^7,\bu^8,\bu^{13},\bu^{14},\bu^{19}\}.
\] 
Radicals of $R$ are $\sqrt[2]{R}=R[\![\bu^8,\bu^{13},\bu^{14},\bu^{19}]\!]=R[\![\bu^8]\!]$ and 
$\sqrt[n]{R}=R'$ for $n>2$. Hence $\FG(R'/R)=\{\bu^8,\bu^{13},\bu^{14},\bu^{19}\}$. 
\end{example}

As observed in the classical case \cite[Lemma~10]{Ojeda-Rosales-2020}, equi-gcd numerical semigroup algebras with a single fundamental gap monomial
have interesting properties.

\begin{lem}\label{sing}
Let $R'/R$ be an equi-gcd numerical semigroup algebra in the variable $\bu$. A monomial 
$\bu^s\in\G(R'/R)$ is the only fundamental gap monomial if and only if 
any gap monomial is a radical of $\bu^s$.
\end{lem}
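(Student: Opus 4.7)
The plan is to unpack the definitions and then prove both directions separately. First, I would note that $\bu^t\in\FG(R'/R)$ means $\bu^t\in\G(R'/R)$ together with $\bu^{2t},\bu^{3t}\in R$; since every integer $n\geq 2$ can be written as $2a+3b$ with $a,b\in\N$, this is equivalent to requiring $\bu^{nt}\in R$ for every $n\geq 2$. I would also recall, as in Lemma~\ref{lem:const}, that the phrase ``$\bu^t$ is a radical of $\bu^s$'' unwinds to the condition $nt=s$ for some $n\geq 1$.

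For the direction assuming every gap monomial is a radical of $\bu^s$, I would first check that $\bu^s$ is itself a fundamental gap: each of $\bu^{2s}$ and $\bu^{3s}$ lies in $R'$, and if either fell in $\G(R'/R)$ the hypothesis would force $2ns=s$ or $3ns=s$ for some $n\geq 1$, an impossibility. For uniqueness, any fundamental gap $\bu^t$ would satisfy $nt=s$ for some $n\geq 1$, and $n\geq 2$ would give $\bu^s=\bu^{nt}\in R$ by the reformulation above, contradicting $\bu^s\in\G(R'/R)$; hence $n=1$ and $\bu^t=\bu^s$.

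For the other direction, I would fix an arbitrary gap $\bu^t$ and consider $T:=\{m\geq 1 : \bu^{mt}\in\G(R'/R)\}$. Because $(\bu^t)^m\in R'=R\cup\G(R'/R)$, one has $m\notin T$ iff $\bu^{mt}\in R$. The set $T$ contains $1$ and is finite, as $\G(R'/R)$ is finite by Proposition~\ref{ff} and the equi-gcd hypothesis, so it admits a maximum $m^*$. For every $n\geq 2$, $nm^*>m^*$ gives $nm^*\notin T$, so $\bu^{nm^*t}\in R$; in particular $\bu^{2m^*t},\bu^{3m^*t}\in R$, which together with $\bu^{m^*t}\in\G(R'/R)$ shows $\bu^{m^*t}\in\FG(R'/R)$. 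Uniqueness of the fundamental gap then forces $m^*t=s$, so $\bu^t$ is a radical of $\bu^s$.

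The main obstacle I foresee is the forward direction, where one has to manufacture a fundamental gap of the form $\bu^{mt}$ out of an arbitrary gap $\bu^t$; the idea that resolves it is to take $m^*$ maximal with $\bu^{m^*t}$ still a gap, which simultaneously witnesses $\bu^{m^*t}\in\G(R'/R)$ and pushes every strictly higher multiple into $R$. Apart from this bookkeeping, the argument is essentially the relative translation of the numerical-semigroup proof in \cite{Ojeda-Rosales-2020}.
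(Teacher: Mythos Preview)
Your proof is correct and follows essentially the same strategy as the paper's. Both directions match the paper's argument up to cosmetic differences: in the forward direction the paper iterates (repeatedly replacing $\bu^t$ by some $\bu^{n_1t}\in\G(R'/R)$ until it hits $\bu^s$) whereas you pass directly to the maximal $m^*$, and in the backward direction the paper shows only that no gap other than $\bu^s$ is fundamental, relying on the remark preceding the lemma that the gap of largest exponent is always fundamental, while you verify $\bu^s\in\FG(R'/R)$ explicitly.
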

\begin{proof}
Assume first that $\FG(R'/R)=\{\bu^s\}$. Given $\bu^t\in\G(R'/R)\setminus\{\bu^s\}$, we have 
$\bu^t\not\in\sqrt[n_1]{R}$ for some $n_1\geq 2$. Hence $\bu^{tn_1}\in\G(R'/R)$. If $\bu^{tn_1}\neq\bu^s$,
there exists $n_2\geq 2$ such that $\bu^{tn_1n_2}\in\G(R'/R)$. Repeating this procedure, eventually we 
find $n\geq 2$ such that $\bu^{tn}=\bu^s$.

On the other hand, we assume that any gap monomial is a radical of $\bu^s$.  Given
$\bu^t\in\G(R'/R)\setminus\{\bu^s\}$, there exists $n\geq 2$ such that $\bu^{tn}=\bu^s$. Hence 
$\bu^t\not\in\sqrt[n]{R}$ and $\bu^t\not\in\FG(R'/R)$.
\end{proof}

\begin{prop}\label{fia}
Any numerical semigroup ring has finitely many equi-gcd coefficient rings, 
over which there is a single fundamental gap monomial.  
\end{prop}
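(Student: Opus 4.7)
The plan is to fix the numerical semigroup ring $R''$ and count equi-gcd coefficient rings $R\subset R''$ with a single fundamental gap monomial by classifying them according to that fundamental gap. After rescaling $\bu$ as in the introduction, I would assume $\log_\bu R''\subset\N$ with $\gcd(\log_\bu R'')=1$, and I would let $w''$ denote the Frobenius number of $\log_\bu R''$.

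Given such an $R$ with unique fundamental gap monomial $\bu^s$, Lemma~\ref{sing} tells me that every element of $\G(R''/R)$ is a radical of $\bu^s$, so its exponent is a positive divisor of $s$. The set of divisors of $s$ lying in $\log_\bu R''$ is finite, and $R$ is recovered from $\log_\bu R=\log_\bu R''\setminus\log_\bu\G(R''/R)$; hence only finitely many $R$ correspond to each value of $s$. It then suffices to bound the possible exponents $s$.

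For the bound, my idea is to exploit closure of $\log_\bu R$ under addition. If $s$ is large relative to $w''$, I would choose an integer $a$ with $s/3<a<s/2$ satisfying $a>w''$ and $s-a>w''$. Then $a$ and $s-a$ both lie in $\log_\bu R''$, while the ratios $s/a\in(2,3)$ and $s/(s-a)\in(3/2,2)$ contain no integer, so neither $a$ nor $s-a$ divides $s$. By the previous paragraph, $\bu^a$ and $\bu^{s-a}$ are not gap monomials, so both lie in $R$; semigroup closure then gives $\bu^s=\bu^a\bu^{s-a}\in R$, contradicting $\bu^s\in\G(R''/R)$. A short interval-length estimate shows such an $a$ exists whenever $s$ exceeds a constant depending only on $w''$, which yields the desired bound on $s$ and completes the argument.

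The main obstacle will be pinpointing the interval $(s/3,s/2)$: I need both ratios $s/a$ and $s/(s-a)$ to fall strictly between consecutive integers, and simultaneously $a$ and $s-a$ to exceed $w''$ for all sufficiently large $s$. Once this is verified, combining the two finiteness statements (finitely many $s$; finitely many $R$ per $s$) gives the proposition.
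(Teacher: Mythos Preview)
Your proposal is correct and follows essentially the same strategy as the paper. Both arguments bound the exponent of the unique fundamental gap monomial by writing it, for large values, as a sum of two elements of $\log_\bu R''$ neither of which divides it; Lemma~\ref{sing} then forces both summands into $R$, contradicting $\bu^s\notin R$. The paper chooses the specific split $m=\lceil (w+1)/2\rceil$ (so $m$ lies just above $w/2$ and $w-m$ just below), whereas you pick $a$ from the interval $(s/3,s/2)$; up to swapping the roles of the two summands these are the same idea. You are also more explicit than the paper about the reduction step ``finitely many $R$ for each bounded $s$,'' which the paper uses tacitly when asserting that infinitely many such $R$ would force $w>2n$.
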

	
\begin{proof}
We may work on a numerical semigroup ring $R'$ in the variable $\bu$ such that $\log_\bu R'\subset\N$ 
and $\gcd(\log_\bu R')=1$. Choose an integer $n>4$ such that $\bu^s\in R'$ for any $s\geq n$.
If the proposition was not true, there would exist an equi-gcd coefficient ring $R$, over which there is a single fundamental gap monomial $\bu^w$ for some $w>2n$. Let
	\[
	m=\begin{cases}
	\frac{w+1}{2}, & \text{ if } w \text{ is odd;}\\ \frac{w+2}{2}, & \text{ if } w \text{ is even}.
	\end{cases}
	\]
Then $\bu^m, \bu^{w-m}\in R'$ are not radicals of $\bu^w$. Hence $\bu^m, \bu^{w-m}\in R$ by Lemma~\ref{sing}.
However, $\bu^m\bu^{w-m}\in R$ contradicts to $\bu^w\in\G(R'/R)$.
\end{proof}

\begin{example}
It is shown in the logarithmic form \cite[Lemma~11]{Ojeda-Rosales-2020} that the numerical semigroup rings
$\kappa[\![\bu[\!]$, $\kappa[\![\bu^2,\bu^3[\!]$, $\kappa[\![\bu^3,\bu^4,\bu^5]\!]$, $\kappa[\![\bu^2,\bu^5]\!]$, 
$\kappa[\![\bu^3,\bu^5,\bu^7]\!]$ and $\kappa[\![\bu^4,\bu^5,\bu^7]\!]$ are all equi-gcd coefficient rings of 
$\kappa[\![\bu]\!]$, over which there is a single fundamental gap monomial. 
\end{example}

\begin{example}
Let $R$ be a non-trivial equi-gcd coefficient ring of $\kappa[\![\bu^2,\bu^3]\!]$. Then  $\FG(\kappa[\![\bu]\!]/R)=\FG(\kappa[\![\bu^2,\bu^3]\!]/R)$.   Non-trivial coefficient rings of $\kappa[\![\bu^2,\bu^3]\!]$, over which, there is a  single fundamental gap monomials are $\kappa[\![\bu^3,\bu^4,\bu^5]\!]$, $\kappa[\![\bu^2,\bu^5]\!]$, 
$\kappa[\![\bu^3,\bu^5,\bu^7]\!]$ and $\kappa[\![\bu^4,\bu^5,\bu^7]\!]$. 
\end{example}

\begin{cor}
Any numerical semigroup ring has finitely many equi-gcd coefficient rings, whose extensions 
are all intersections of radicals. 
\end{cor}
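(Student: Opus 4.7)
The plan is to reduce the corollary to Proposition~\ref{fia} by proving that every proper equi-gcd coefficient ring $R\subsetneq R'$ with the ``all extensions are intersections of radicals'' property satisfies $|\FG(R'/R)|=1$.

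As a preliminary observation, whenever $\G(R'/R)$ is nonempty its largest monomial $\bu^w$ automatically lies in $\FG(R'/R)$: the semigroup $\log_\bu R'$ is closed under addition, so $2w,3w\in\log_\bu R'$, and because both exceed $w$ they cannot themselves be gaps and therefore lie in $\log_\bu R$. Hence $\bu^w\in\sqrt[2]{R}\cap\sqrt[3]{R}$, giving $|\FG(R'/R)|\geq 1$ as soon as $R\neq R'$.

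The heart of the argument is the reverse inequality $|\FG(R'/R)|\leq 1$ under the hypothesis. Assume for contradiction there exist two fundamental gap monomials $\bu^{s_1},\bu^{s_2}$ with $s_1<s_2$. The hypothesis writes $R[\![\bu^{s_1}]\!]=\bigcap_i\sqrt[n_i]{R}$, and since this extension strictly contains $R$ we must have $n_i\geq 2$ for all $i$. The fundamental gap condition $\bu^{2s_2},\bu^{3s_2}\in R$ combined with closure of $R$ under multiplication yields $\bu^{ms_2}\in R$ for every $m$ in the additive semigroup $\langle 2,3\rangle$, i.e.\ for every $m\geq 2$. In particular $\bu^{n_is_2}\in R$ for all $i$, so $\bu^{s_2}\in\sqrt[n_i]{R}$ for all $i$, and thus $\bu^{s_2}\in R[\![\bu^{s_1}]\!]$. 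Writing $s_2=a+js_1$ with $a\in\log_\bu R$ and $j\geq 0$, we must have $j\geq 1$ (else $s_2\in\log_\bu R$, contradicting that $\bu^{s_2}$ is a gap), so $s_2\geq s_1$. Running the symmetric argument with $R[\![\bu^{s_2}]\!]$ produces $s_1\geq s_2$, contradicting $s_1<s_2$.

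Combining the two bounds, $|\FG(R'/R)|=1$ for every proper equi-gcd coefficient ring $R\subsetneq R'$ satisfying the hypothesis. Proposition~\ref{fia} therefore provides only finitely many such $R$, and including the single trivial case $R=R'$ completes the count. The main obstacle I anticipate is the second paragraph: one has to recognise that being a fundamental gap forces $\bu^{s_2}$ to be an $n$th radical of $R$ for \emph{every} $n\geq 2$, and then feed this through the assumed presentation of $R[\![\bu^{s_1}]\!]$ as an intersection of radicals to force $\bu^{s_2}$ into that extension, where the semigroup structure of $R[\![\bu^{s_1}]\!]$ yields the desired inequality.
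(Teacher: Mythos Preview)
Your proof is correct and follows the same approach as the paper: both reduce to Proposition~\ref{fia} by showing that two fundamental gap monomials $\bu^t,\bu^s$ with $t<s$ would make $R[\![\bu^s]\!]$ fail to be an intersection of radicals, since any such intersection contains every fundamental gap (as $\FG(R'/R)\subset\sqrt[n]{R}$ for all $n\ge 2$) while $R[\![\bu^s]\!]$ cannot contain $\bu^t$. Your first pass with $R[\![\bu^{s_1}]\!]$ is redundant---it only yields $s_2\ge s_1$, which you already assumed---and the content lies entirely in the ``symmetric'' step with $R[\![\bu^{s_2}]\!]$, which is precisely the paper's argument.
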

\begin{proof}
Let $R'$ be a numerical semigroup ring in the variable $\bu$. Given an equi-gcd coefficient ring 
$R$ whose extensions are all intersections of radicals, it suffices to prove that $\FG(R'/R)$ is a singleton.
Assume the contrary that $\FG(R'/R)$ contains monomials $\bu^t$ and $\bu^s$ such that $t<s$. Then the extension $R[\![\bu^s]\!]$ is not  an intersection of radicals, because it does not contain $\bu^t$.
\end{proof}

\begin{prop}\label{FGsing}
Let $R'/R$ be an equi-gcd numerical semigroup algebra in the variable $\bu$. Then $\FG(R'/R)$ is a singleton if
and only if all extensions of $R$ in $R'$ are  intersections of radicals. 
\end{prop}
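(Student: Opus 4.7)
The plan is to prove both implications directly. For the forward direction I will explicitly express every extension of $R$ as an intersection of radicals $\sqrt[m]{R}$ for suitable integers $m$. For the backward direction I will argue contrapositively by exhibiting, under $|\FG(R'/R)|\geq 2$, a single extension not so expressible.

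For the forward direction, assume $\FG(R'/R)=\{\bu^s\}$. Iterating Lemma~\ref{sing} shows every gap has the form $\bu^{s/m}$ for some positive integer $m$ dividing $s$; set $M=\{m\in\N : \bu^{s/m}\in\G(R'/R)\}$. Given an extension $R_1$, let $M_X=\{m\in M : \bu^{s/m}\in R_1\}$. I first verify $M_X$ is downward closed under divisibility in $M$: if $m\in M_X$ and $m'\mid m$ with $m'\in M$, then $\bu^{s/m'}=(\bu^{s/m})^{m/m'}\in R_1$. I then claim $R_1=\bigcap_{m\in M\setminus M_X}\sqrt[m]{R}$ (the empty intersection being $R'$). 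The non-inclusion $\bu^{s/m}\notin\sqrt[m]{R}$ is immediate from $(\bu^{s/m})^m=\bu^s\notin R$, so the right-hand side excludes every gap missing from $R_1$. To see $R_1\subseteq\sqrt[m]{R}$, I must check $(\bu^u)^m\in R$ for every $\bu^u\in R_1$. The only non-trivial case is $\bu^u=\bu^{s/m'}\in R_1\cap\G(R'/R)$ with $m'\in M_X$, and I argue by cases: if $m'<m$ then $ms/m'>s$ exceeds the largest gap exponent, so $\bu^{ms/m'}\in R$; the equality $m'=m$ is impossible since $m'\in M_X$ while $m\notin M_X$; if $m'>m$ with $m\mid m'$ then downward closure forces $m\in M_X$, another contradiction; and if $m'>m$ with $m\nmid m'$, then $ms/m'\in\N$ is not of the form $s/k$ for any $k\in\Z_+$, so $\bu^{ms/m'}$ is not a gap and lies in $R$.

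For the backward direction, I argue contrapositively. Assume $|\FG(R'/R)|\geq 2$, let $\bu^s\in\FG(R'/R)$ be the gap of largest exponent, and pick $\bu^{s_1}\in\FG(R'/R)\setminus\{\bu^s\}$. Consider $R_1=R[\![\bu^s]\!]$. Every monomial $\bu^{s+r}$ with $\bu^r\in R\setminus\{1\}$ has exponent exceeding the largest gap and hence lies in $R$; also $\bu^{ks}\in R$ for all $k\geq 2$ since $\bu^s\in\FG(R'/R)$. Consequently $R_1=R\cup\{\bu^s\}$, and in particular $\bu^{s_1}\notin R_1$. If $R_1=\bigcap_i\sqrt[n_i]{R}$, then $\bu^s\in\sqrt[n_i]{R}$ for every $i$, forcing $n_i\geq 2$ (else $\bu^s\in R$). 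For each such $n_i$ the fundamental gap property of $\bu^{s_1}$ yields $\bu^{n_i s_1}\in R$, so $\bu^{s_1}\in\sqrt[n_i]{R}$ for every $i$, hence $\bu^{s_1}\in R_1$, contradicting our construction.

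The main obstacle is the case analysis in the forward direction for $R_1\subseteq\sqrt[m]{R}$. A priori, multiplying a gap $\bu^{s/m'}\in R_1$ by the power $m$ could produce another gap $\bu^{ms/m'}$ rather than an element of $R$, and our hypothesis only guarantees that such a product remains in $R_1$. The downward closure of $M_X$ under divisibility is exactly what rules out the configuration ($m\mid m'$ with $m'\in M_X$ and $m\notin M_X$) in which this pathology could occur, reducing every remaining case to a product whose exponent either exceeds $s$ or fails to be of gap form.
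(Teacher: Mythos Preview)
Your proof is correct and follows the same overall strategy as the paper: the backward direction is identical (adjoin the largest-exponent fundamental gap and observe that any other fundamental gap lies in every $\sqrt[n]{R}$ with $n\ge 2$), and the forward direction also expresses an extension as $\bigcap\sqrt[d']{R}$ over the indices corresponding to the excluded gaps. The main difference is in execution: where you parametrize gaps by $M=\{m:\bu^{s/m}\in\G(R'/R)\}$ and run a four-way case split on $m'$ versus $m$, the paper checks only the generators $\bu^{w_i}$ of the extension and argues uniformly---if $\bu^{w_i}\notin\sqrt[d']{R}$ then $\bu^{d'w_i}$ is a gap, hence $d_id'w_i=w=d'w'$ for some $d_i$, whence $\bu^{w'}=(\bu^{w_i})^{d_i}$ lies in the extension after all. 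This single divisibility identity replaces your downward-closure lemma and case analysis. Two cosmetic remarks: the assertion ``$ms/m'\in\N$'' is neither justified nor needed (exponents may be rational in this framework; what matters is only that $ms/m'\ne s/k$ for any positive integer $k$), and ``$R_1=R\cup\{\bu^s\}$'' should be read as a statement about monomials.
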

\begin{proof}
Assume first that $\bu^s,\bu^t\in\FG(R'/R)$ and $t<s$. Then the extension $R[\![\bu^s]\!]$ is not 
 an intersection of radicals, because it does not contain $\bu^t$. On the other hand, we assume that $\FG(R'/R)=\{\bu^w\}$.
Consider an extension $R[\![\bu^{w_1},\ldots,\bu^{w_n}[\!]$, where $\bu^{w_1},\ldots,\bu^{w_n}\in\G(R'/R)$.
For each $\bu^{w'}\in\G(R'/R)\setminus R[\![\bu^{w_1},\ldots,\bu^{w_n}[\!]$, there exits $d'\in\N$ such that
$d'w'=w$. Hence $\bu^{w'}\not\in\sqrt[d']{R}$. We claim that $\bu^{w_i}\in\sqrt[d']{R}$ for all $i$. It then
follows that $R[\![\bu^{w_1},\ldots,\bu^{w_n}[\!]$ is the intersection of all such radicals of $R$. If 
$\bu^{w_i}\not\in\sqrt[d']{R}$, there exists $d_i\in\N$ such that $d_id'w_i=w$. Together with $d'w'=w$,
we obtain $d_iw_i=w'$ contradicting to $\bu^{w'}\not\in R[\![\bu^{w_1},\ldots,\bu^{w_n}[\!]$.

\end{proof}

\section*{Acknowledgments}
This work has been initiated during the visit of Raheleh Jafari to the Institute of Mathematics, Academia Sinica in 2018. The  authors would like to thank the institute for the great hospitality and support.

%%%%%%%%%%%%%%%%%%%%%%%%%%%%%%%%%%%%%%%%%%%%


\begin{thebibliography}{2}

%%%%%%%%%%%%%%%%%%%%%%%%%%%%%%%%%%%%%%%%%%%%

\bibitem{Barucci-Froberg-1997}
V. Barucci\ and\ R. Fr\"{o}berg, One-dimensional almost Gorenstein rings, J. Algebra {\bf 188} (1997), no.~2, 418--442.

\bibitem{Duplication}
M. D'Anna\ and\ F. Strazzanti, The numerical duplication of a numerical semigroup, Semigroup Forum {\bf 87} (2013), no.~1, 149--160.




\bibitem{Froberg-etal-1987}
R.~Fr\"{o}berg, C. Gottlieb and R. H\"{a}ggkvist,
On numerical semigroups, Semigroup Forum {\bf 35} (1987), no.~1, 63--83.


\bibitem{HJ}
I-C. Huang\ and\ R. Jafari, Factorizations in numerical semigroup algebras, J. Pure Appl. Algebra {\bf 223} (2019), no.~5, 2258--2272.


\bibitem{HK}
I-C. Huang and M.-K. Kim,  Numerical semigroup algebras,
Comm. Algebra {\bf 48} (2020) no.~3, 1079-1088.

\bibitem{Kunz-1970}
E. Kunz, The value-semigroup of a one-dimensional Gorenstein ring, Proc. Amer. Math. Soc. {\bf 25} (1970), 748--751. 

\bibitem{Nari-2013}
H. Nari, Symmetries on almost symmetric numerical semigroups, Semigroup Forum {\bf 86} (2013), no.~1, 140--154.



\bibitem{Ojeda-Rosales-2020}
I. Ojeda and J.~C. Rosales, 
The arithmetic extensions of a numerical semigroup, Comm. Algebra. Published online (2020),  https://doi.org/10.1080/00927872.2020.1744159.


\bibitem{Rosales-1996}
J. C. Rosales, On symmetric numerical semigroups, J. Algebra {\bf 182} (1996), no.~2, 422--434.


\bibitem{Rosales-Branco-2003}
J. C. Rosales\ and\ M. B. Branco, Irreducible numerical semigroups, Pacific J. Math. {\bf 209} (2003), no.~1, 131--143. 


\bibitem{Rosales-GarciaSanchez-2008}
J. C. Rosales\ and\ P. A. Garc\'{\i}a-S\'{a}nchez, Every numerical semigroup is one half of a symmetric numerical semigroup, Proc. Amer. Math. Soc. {\bf 136} (2008), no.~2, 475--477.

\bibitem{Rosales-GarciaSanchez-2008-inf}
J. C. Rosales\ and\ P. A. Garc\'{\i}a-S\'{a}nchez, Every numerical semigroup is one half of infinitely many symmetric numerical semigroups, Comm. Algebra {\bf 36} (2008), no.~8, 2910--2916.


\bibitem{RG}
J.~C. Rosales and P.~A. Garc\'{\i}a-S\'{a}nchez.
\newblock \em{Numerical semigroups}, volume~20 of \em{Developments in
	Mathematics}.
\newblock Springer, New York, 2009.


\bibitem{Fundamental-2004}
J. C.~Rosales, P. A. Garc\'{\i}a-S\'{a}nchez, J. I. Garc\'{\i}a-Garc\'{\i}a and  J. A. 
Jim\'{e}nez Madrid, Fundamental gaps in numerical semigroups, J. Pure Appl. Algebra {\bf 189} (2004), no.~1-3, 301--313.

\bibitem{oversemigroups}
J. C. Rosales,  P. A. Garc\'{\i}a-S\'{a}nchez, J. I. Garc\'{\i}a-Garc\'{\i}a and  J. A. Jiménez Madrid, The oversemigroups of a numerical semigroup, Semigroup Forum {\bf 67} (2003), no.~1, 145--158.



\bibitem{Stamate-2018}
D. I. Stamate, Betti numbers for numerical semigroup rings, in  {\it Multigraded algebra and applications}, 133--157, Springer Proc. Math. Stat., 238, Springer, Cham, 2018.


\bibitem{Swanson-2009}
I. Swanson, Every numerical semigroup is one over $d$ of infinitely many symmetric numerical semigroups, in {\it Commutative algebra and its applications}, 383--386, Walter de Gruyter, Berlin, 2009.

\end{thebibliography}
\end{document}